\documentclass[a4paper,10pt]{amsart}
 \usepackage{amsfonts,mathrsfs}
 \usepackage{amsthm}
 \usepackage{amssymb}
 \usepackage{enumerate}
 \usepackage{tikz-cd}
\theoremstyle{definition}
\newtheorem{Def}{Definition}[section]
\newtheorem{example}[Def]{Example}
\newtheorem{Bem}[Def]{Remark}

\theoremstyle{plain}
\newtheorem{Prop}[Def]{Proposition}
\newtheorem{Thm}[Def]{Theorem}
\newtheorem*{Thm*}{Theorem}
\newtheorem{Lem}[Def]{Lemma}
\newtheorem{Kor}[Def]{Corollary}
\newtheorem*{Kor*}{Corollary}

\newtheorem*{con*}{Conjecture}
\newtheorem*{frag*}{Question}
\newtheorem*{verm*}{Vermutung}

\usepackage{mathrsfs}




\newcommand{\Nil}{\operatorname{Nil}}

\newcommand{\Hom}{\operatorname{Hom}} 
\newcommand{\Spec}{\operatorname{Spec}}

\newcommand{\Supp}{\operatorname{Supp}}


\newcommand{\codim}{\operatorname{codim}}



\newcommand{\GL}{\operatorname{{\mathbf GL}}}

\newcommand{\Quot}{\operatorname{Quot}}


\newcommand{\tr}{\operatorname{tr}}





\newcommand{\cF}{{\mathcal F}}

\newcommand{\cH}{{\mathcal H}}

\newcommand{\cL}{{\mathcal L}}

\newcommand{\cN}{{\mathcal N}}
\newcommand{\cO}{{\mathcal O}}

\newcommand{\cV}{{\mathcal V}}


\newcommand{\fP}{{\mathfrak P}}


\newcommand{\fm}{{\mathfrak m}}
\newcommand{\fn}{{\mathfrak n}}

\newcommand{\fp}{{\mathfrak p}}


\newcommand{\A}{{\mathbb A}}

\newcommand{\C}{{\mathbb C}}
\newcommand{\R}{{\mathbb R}}
\newcommand{\pp}{\mathbb{P}}
\newcommand{\Q}{{\mathbb Q}}

\newcommand{\Z}{{\mathbb Z}}




\title{On deformations of hyperbolic varieties}
\author{Mario Kummer}
\address{University of Konstanz, Germany}
\curraddr{Technische Universit\"at Berlin, Germany}
\email{kummer@tu-berlin.de} 
\author{Eli Shamovich}
\address{Department of  Mathematics\\ 
Technion - Israel Institute of Mathematics\\
Haifa, 3200003, Israel}
\curraddr{Pure Math.\ Dept., University of Waterloo, Waterloo, ON, N2L 3G1, Canada}
\email{eshamovich@uwaterloo.ca}

\newcommand{\comment}[1]{}

\pagestyle{headings}
\begin{document}

\subjclass[2010]{Primary: 14P99, secondary: 14D99}

\begin{abstract}
 In this paper we study flat deformations of real subschemes of $\mathbb{P}^n$, hyperbolic with respect to a fixed linear subspace, i.e. admitting a finite surjective and real fibered linear projection. We show that the subset of the corresponding Hilbert scheme consisting of such subschemes is closed and connected in the classical topology. Every smooth variety in this set lies in the interior of this set. Furthermore, we provide sufficient conditions for a hyperbolic subscheme to admit a flat deformation to a smooth hyperbolic subscheme. This leads to new examples of smooth hyperbolic varieties.
\end{abstract}
\maketitle

\section{Introduction}

The study of hyperbolic varieties grew out of the study of homogeneous hyperbolic polynomials. Hyperbolic polynomials were brought to light due to their applications to partial differential equations and in particular to the question whether a Cauchy problem is well posed for a given PDE (see \cite[Ch.\ 23]{HorIII} for more details). A homogeneous polynomial $f \in \R[x_0,\ldots,x_n]$ of degree $m$ is said to be hyperbolic with respect to a point $e \in \R^{n+1}$ if $f(e) \neq 0$ and if for every $x \in \R^{n+1}$ the univariate polynomial $f(x + t e)$ has precisely $m$ real roots counting multiplicities. If for every $x \in \R^{n+1}$ the polynomial $f(x + t e)$ has $m$ simple real roots, then $f$ is called strictly hyperbolic with respect to $e$. The remarkable convexity properties of hyperbolic polynomials were discovered by G\r{a}rding in \cite{gar}. G\r{a}rding proved that the connected component $C$ of $e$ in $\R^{n+1} \setminus \{ f(x) = 0 \}$ is a convex cone. If $f$ is irreducible, the closure of $C$ is called the hyperbolicity cone of $f$. Hyperbolic polynomials and their hyperbolicity cones quickly found applications outside the scope of PDEs, for example in convex analysis \cite{Bauschke}, in optimization \cite{Bauschke,Gu97,Re06} and in functional analysis \cite{intI, intII, Bra14}. See also \cite{Vppf} for additional examples of applications and references.

In 1968 Nuij considered the space of homogeneous polynomials of degree $m$ in $n+1$ variables, hyperbolic with respect to a fixed $e \in \R^{n+1}$, with the topology induced by the norm of the vector of coefficients. He proved that this space has non-empty interior (the strictly hyperbolic polynomials) and it is connected and simply connected. This is the main theorem of \cite{nuij}. In particular, every hyperbolic polynomial is a limit of strict hyperbolic polynomials and thus can be deformed to a smooth polynomial hyperbolic with respect to the same point. His result was used by Helton and Vinnikov in \cite{HV07} to prove that every smooth hyperbolic hypersurface of degree $m$ in $\pp^n(\R)$ is isotopic to a union of concentric spheres if $m$ is even and a union of concentric spheres and a single hyperplane if $m$ is odd. Furthermore, the isotopy passes only through smooth real hypersurfaces hyperbolic with respect to $e$.

From now on all of the schemes, varieties, and morphisms are assumed to be defined over $\R$. In particular $\pp^n$ stands for $\pp^n_\R$. Hyperbolic varieties were introduced by Vinnikov and the second author in \cite{Sha14}. A subvariety $X \subset \pp^n$ of dimension $k$ is said to be hyperbolic with respect to a linear subspace $E \subset \pp^n$ of dimension $n - k -1$ if $E \cap X = \emptyset$ and for every linear subspace $U \subset \pp^n$ of dimension $n-k$ that contains $E$, we have that $U$ intersects $X$ only at real points. This notion generalizes readily to subschemes of $\pp^n$. The notion of general hyperbolicity was further studied and expanded by the authors in \cite{us}. In particular, it was proved that each connected component of the real points of a smooth irreducible hyperbolic variety of dimension $k$ is homeomorphic to either $S^k$ or $\pp^k(\R)$. This provided a partial generalization of the result of Helton and Vinnikov. We show, however, in Examples \ref{ex:different_writhe} and \ref{ex:shastri} that a straightforward generalization of the Helton-Vinnikov result fails.

In order to attempt a generalization of Nuij's result to the general hyperbolic case, we need a slightly different point of view. Consider the space that parametrizes hypersurfaces of degree $m$ in $\pp^n$, that is $\pp^{\binom{m+n}{n}-1}$ and inside this consider the subset of all hypersurfaces hyperbolic with respect to $e$. Then Nuij's theorem implies that this set is connected, has non-empty interior and every hyperbolic hypersurface is in the closure (all with respect to the classical topology) of the smooth ones hyperbolic with respect to the same point. In this paper, we study the generalization of this question to the case of general hyperbolic varieties. Namely, let $P$ be a univariate polynomial over $\Q$ of degree $k$ and let $H$ be the Hilbert scheme of closed subschemes of $\pp^n$ with Hilbert polynomial $P$. This is a projective scheme over $\R$ thus on the set $H(\R)$ we have the classical topology.
In Section \ref{sec:closed} we show that the set of all closed equidimensional subschemes, that are hyperbolic with respect to a fixed linear subspace $E$ of dimension $n-k-1$, is closed inside the open subset of $H(\R)$ consisting of subschemes that do not intersect $E$ (Theorem \ref{thm:hyperbolic_locus_closed}). The main ingredients are the open mapping theorem and the curve selection lemma from real algebraic geometry.

In Section \ref{sec:open} we study the subset of subschemes hyperbolic with respect to $E$, such that the projection from $E$ induces on them a map to $\pp^k$ unramified at real points. These subschemes are the generalization of strictly hyperbolic polynomials. In particular, by \cite[Thm.\ 2.19]{us} for every equidimensional smooth subvariety hyperbolic with respect to $E$, the projection from $E$ is unramified at real points. We show that the set of such subschemes is open inside the set of all hyperbolic subschemes. Though unlike in the case of hypersurfaces, this subset can be empty or disconnected as is shown in Examples \ref{ex:different_writhe} and \ref{ex:nonsmoothable}. 

In Section \ref{sec:connected} we apply the methods of \cite{Har66} to show that every subscheme of $\pp^n$ hyperbolic with respect to $E$ can be deformed into a tight fan of linear subspaces hyperbolic with respect to $E$ such that every fiber of the deformation over a closed point is a subscheme hyperbolic with respect to $E$. In particular, the set of hyperbolic subschemes in $H(\R)$ is connected (Theorem \ref{thm:connected}).

Section \ref{sec:smoothing} starts with describing a class of first-order deformations satisfying a certain positivity condition. We call those deformations \textit{strict hyperbolic deformations}. Let $X \subset \pp^n$ be a subscheme hyperbolic with respect to $E$ and assume that there exists a strict hyperbolic deformation $\varphi$. Let $x \in H(\R)$ be the point corresponding to $X$ and let us assume that the Hilbert scheme is smooth at $x$. Then $\varphi$ corresponds to a tangent direction to the Hilbert scheme at $x$ and there exists a curve tangent to $\varphi$ that deforms $X$ to a smooth subscheme that is hyperbolic with respect to $E$. This method provides new examples of smooth hyperbolic varieties.

\section{Closedness of the Hyperbolic Locus} \label{sec:closed}

In the following, by a \textit{curve} we mean a variety of dimension one (in particular reduced). For a quasi-projective scheme $X$ defined over the reals, open and closed sets in $X(\R)$ are always with respect to the classical topology and otherwise with respect to the Zariski topology. In this section, we want to prove the closedness of the hyperbolic locus in the real part of the Hilbert scheme. The idea is to first prove it for one-dimensional families and then to extend it to the general case via the curve selection lemma. We start with recalling some lemmas from algebraic geometry.

\begin{Lem}\label{lem:curvefibercurve}
 Let $f: X \to Y$ be a dominant morphism of irreducible $\R$-varieties. Let $x \in X$ be a closed point such that $y=f(x) \in Y$ is a smooth point. Let $C \subset Y$ be an irreducible curve such that $y \in C$ is a smooth point of $C$. There is an irreducible curve $C' \subset X$ with $x \in C'$ and $f(C') \subset C$.
\end{Lem}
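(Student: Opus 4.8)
The plan is to reduce to the affine situation and then use a dimension-counting argument on an incidence variety, exploiting that $y$ is a smooth point of $Y$. First I would replace $Y$ by an affine open neighborhood of $y$ and $X$ by an affine open neighborhood of $x$ in $f^{-1}(\text{that neighborhood})$; since $f$ is dominant and $X$ is irreducible, the restricted morphism is still dominant between irreducible affine varieties, and $C$ meets this neighborhood in a nonempty (hence dense) open subcurve still smooth at $y$. So without loss of generality $X, Y$ are affine and irreducible, $f: X \to Y$ dominant, $y$ a smooth point of $Y$, and $C \subseteq Y$ an irreducible curve smooth at $y$.

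The key step is to understand the generic fiber dimension. Let $r = \dim X - \dim Y$ be the relative dimension, so that by the theorem on fiber dimension the generic fiber of $f$ has dimension $r$, and every nonempty fiber has dimension $\ge r$. Consider $Z = f^{-1}(C)$ with its reduced structure. Since $C$ is a curve and $f$ is dominant, $Z$ has dimension $r + 1$ (every component dominating $C$ has this dimension by the fiber-dimension theorem, and there is at least one such component because $f$ is dominant so $f^{-1}(C)$ cannot be contained in $f^{-1}$ of the generic point's complement — more carefully, the generic fiber has dimension $r$ and $C$ has a point in the image, so some component of $Z$ surjects onto a dense subset of $C$). Let $Z_0$ be an irreducible component of $Z$ that dominates $C$ and contains $x$; such a component exists since $x \in f^{-1}(y) \subseteq Z$. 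Then $\dim Z_0 = r+1$ and $f|_{Z_0}: Z_0 \to C$ is dominant with generic fiber of dimension $r$.

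Now I would cut down by $r$ general hyperplane sections through $x$. Working in the affine space containing $X$, choose hyperplanes $L_1, \dots, L_r$ passing through $x$ in sufficiently general position; by Bertini-type / general-position arguments for intersecting with a fixed point, $C' := Z_0 \cap L_1 \cap \dots \cap L_r$ has dimension exactly $1$ near $x$ (each section drops the dimension by exactly one since $x$ lies on the section but the section is otherwise general), it contains $x$, and it still dominates $C$ because the generic fiber of $Z_0 \to C$ has dimension $r$ so a general codimension-$r$ linear slice meets the generic fiber in finitely many points, keeping the image dense in $C$. Finally, replace $C'$ by an irreducible component through $x$ that dominates $C$; shrinking if necessary we may take it reduced, i.e. a curve in the sense fixed above, and $f(C') \subseteq C$ by construction while the image is dense in the irreducible curve $C$.

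**Main obstacle.** The delicate point is ensuring that the chosen component $Z_0$ through $x$ actually dominates $C$, and that the linear slices genuinely drop the dimension by exactly one while preserving both the passage through $x$ and the dominance onto $C$. The hypothesis that $y$ is a smooth point of $Y$ is what makes the fiber $f^{-1}(y)$ behave well (in particular it guarantees $f^{-1}(y)$ is not contained in the ramification/bad locus in a way that would obstruct this), and the hypothesis that $y$ is a smooth point of $C$ is used so that near $y$ the curve $C$ is a genuine one-dimensional branch, so that $Z = f^{-1}(C)$ is a Cartier-type slice of $X$ near the fiber and the relative dimension count $\dim Z_0 = r+1$ holds without pathology. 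I expect the careful bookkeeping of these general-position choices to be the bulk of the actual proof.
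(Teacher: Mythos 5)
There is a genuine gap at the central step. You set $Z=f^{-1}(C)$ and claim there is an irreducible component $Z_0$ of $Z$ which both contains $x$ and dominates $C$, justifying this only by ``$x\in f^{-1}(y)\subseteq Z$''. That gives a component through $x$, not one dominating $C$, and in fact no such component need exist: take $X=Y=\mathbb{A}^2$, $f(u,v)=(u,uv)$, $x=y=(0,0)$ and $C$ the axis $\{u=0\}$; then $f^{-1}(C)=\{u=0\}$ is irreducible, contains $x$, and is contracted by $f$ to the single point $y$, so no component of $Z$ dominates $C$. (The lemma still holds there, since the conclusion only asks for $f(C')\subseteq C$, not dominance --- but your $Z_0$ does not exist, and the subsequent slicing argument, which uses $\dim Z_0=r+1$ and the generic fibre dimension of $Z_0\to C$, has nothing to run on.) The statement that actually needs proof, and that you implicitly assume, is that the component of $f^{-1}(C)$ through $x$ has positive dimension; a priori $x$ could be an isolated point of $f^{-1}(C)$ once the smoothness hypotheses are dropped.

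This is exactly where both smoothness assumptions enter, and the paper's proof makes it precise: since $\mathcal{O}_{Y,y}$ is regular of dimension $d=\dim Y$ and $\mathcal{O}_{C,y}$ is regular of dimension one, the ideal of $C$ in $\mathcal{O}_{Y,y}$ is generated by $d-1$ elements. Pulling these back to $\mathcal{O}_{X,x}$ (a local ring of dimension at least $d$, as $f$ is dominant between irreducible varieties), Krull's height theorem yields a minimal prime over them of height at most $d-1$, which extends to a prime $\mathfrak{P}'$ with $\dim\mathcal{O}_{X,x}/\mathfrak{P}'=1$; its closure is the desired curve, and $f(C')\subseteq C$ because $\mathfrak{P}'$ contains the pulled-back generators of the ideal of $C$. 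Your ``Cartier-type slice'' remark in the obstacle paragraph gestures at this, but it is never turned into an argument, and the step you do write down (a dominating component through $x$) is false as stated. If you replace it by the local-complete-intersection-plus-Krull bound, your hyperplane slicing does then correctly cut the component through $x$ down to a curve, provided you drop all claims of dominance, which are neither needed nor available.
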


\begin{proof}
 This proof is taken from \cite[Lem. 2.3.7]{kummerdiss}. Without loss of generality we can assume that $X=\Spec B$ and $Y=\Spec A$ are affine schemes
 where $A\subset B$ is an extension ($f$ is dominant) of finitely generated $\R$-algebras without zero divisors.
 Let $\fm \subset A$ and $\fn \subset B$ be the maximal ideals corresponding to $y$ and $x$.
 Let $\fp \subset A$ be the prime ideal corresponding to $C$.
 Let $d=\dim A$. 
 The ideal $\fp A_{\fm}$ of the regular local ring $A_{\fm}$ is generated
 by $d-1$ elements $a_1, \ldots, a_{d-1} \in A_{\fm}$ because $A_{\fm}/ \fp A_{\fm}$ is regular of dimension one
 (cf. \cite[\S 17.F, Thm. 36]{mats}).
 The ideal $I$ of $B_{\fn}$ that is generated by $a_1,\ldots,a_{d-1}$ is contained in a prime ideal $\fP$
 of height at most $d-1$ by Krull's Height Theorem (\cite[\S 12.I, Thm. 18]{mats}). 
 Since $\dim B_{\fn} \geq d$, there is a prime ideal $\fP'$ of $B_{\fn}$ that contains $\fP$
 such that $\dim B_{\fn} /\fP'=1$. This gives the desired curve $C'$.
\end{proof}

\begin{Lem}\label{lem:curvefibercurve2}
 Let $f: X \to Y$ be a quasi-finite morphism of $\R$-varieties. Assume that every irreducible component of $X$ dominates $Y$ and let $Y$ be smooth and irreducible. Let $C \subset Y$ be a smooth irreducible curve. Then every irreducible component of $f^{-1}(C)$ dominates $C$.
\end{Lem}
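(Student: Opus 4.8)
The assertion is equivalent to the statement that $f^{-1}(C)$ has no zero-dimensional irreducible components: a component $Z$ with $\dim Z \geq 1$ cannot be contracted to a point by the quasi-finite map $f$, so its image spans a positive-dimensional, hence dense, subset of the irreducible curve $C$. Thus the plan is to fix an arbitrary irreducible component $Z$ of $f^{-1}(C)$ and exhibit an honest curve contained in $f^{-1}(C)$ and passing through a sufficiently general closed point of $Z$; this immediately gives $\dim Z \geq 1$.

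First I would choose a closed point $z$ of $Z$ that lies on no other irreducible component of $f^{-1}(C)$; such points form a nonempty Zariski-open subset of $Z$ and therefore include a closed point of $X$. Let $W$ be an irreducible component of $X$ containing $z$. By hypothesis $W$ dominates $Y$, so $f|_W \colon W \to Y$ is a dominant morphism of irreducible $\R$-varieties. Since $Y$ is smooth, $f(z)$ is a smooth point of $Y$, and since $C$ is smooth, $f(z)$ is a smooth point of $C$; hence Lemma \ref{lem:curvefibercurve} applies to $f|_W$, the point $z$, and the curve $C$, and produces an irreducible curve $C' \subseteq W \subseteq X$ with $z \in C'$ and $f(C') \subseteq C$, i.e. $C' \subseteq f^{-1}(C)$. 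As $C'$ is irreducible it is contained in a single irreducible component of $f^{-1}(C)$; since $z \in C'$ and $z$ was chosen to lie only on $Z$, this component must be $Z$, so $\dim Z \geq \dim C' = 1$.

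It remains to pass from $\dim Z \geq 1$ to the domination statement: the restriction $f|_Z \colon Z \to C$ is quasi-finite, hence $\dim \overline{f(Z)} = \dim Z \geq 1$, and a closed subset of the irreducible curve $C$ of dimension $\geq 1$ is all of $C$. The only step that needs genuine care is the bookkeeping with irreducible components --- ensuring that the curve supplied by Lemma \ref{lem:curvefibercurve} lies in the prescribed component $Z$ and not in some other component of $f^{-1}(C)$ sharing the chosen point --- and this is exactly what the general-position choice of $z$ handles; everything else is formal. Alternatively, one can bypass Lemma \ref{lem:curvefibercurve} and argue directly: locally near a point of the fibre, $C$ is cut out of the regular scheme $Y$ by $\dim Y - 1$ equations, so by Krull's Height Theorem every component of $f^{-1}(C)$ has codimension at most $\dim Y - 1$ in the corresponding component of $X$, which again has dimension $\dim Y$, forcing dimension $\geq 1$.
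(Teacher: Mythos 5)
Your proof is correct and takes essentially the same route as the paper's: reduce via quasi-finiteness to showing that every irreducible component of $f^{-1}(C)$ has dimension one, then apply Lemma \ref{lem:curvefibercurve} to an irreducible component of $X$ through a closed point of the fibre to produce a curve inside $f^{-1}(C)$. The only difference is cosmetic bookkeeping: you choose the point in general position on the prescribed component, while the paper implicitly uses the maximality of irreducible components to exclude isolated points; both work.
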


\begin{proof}
  Since $f$ is quasi-finite, it suffices to show that every irreducible component of $f^{-1}(C)$ is a curve. Let $x \in f^{-1}(C)$ be a closed point. By assumption there is an irreducible component $X_i$ of $X$ with $x \in X_i$ such that $X_i\to Y$ is dominant. By Lemma \ref{lem:curvefibercurve} there is an irreducible curve $C' \subset X_i$ with $x \in C' \subset f^{-1}(C)$.
\end{proof}

\begin{Lem}\label{lem:irreddim}
 Let $Y$ be an irreducible curve and $f: X\to Y$ a flat morphism such that all fibers over closed points are of pure dimension $k$. Then every irreducible component of $X$ has dimension $k+1$.
\end{Lem}

\begin{proof}
 Let $x\in X$ be a closed point and $y=f(x)$. Then the local dimensions satisfy $$\dim_x X=\dim_y Y+\dim_x f^{-1}(y)=1+k$$by \cite[Cor. 6.1.2]{EGAIV2} since $f$ is flat.
\end{proof}

%

The following lemma is the first closedness result which easily follows from the open mapping theorem.

\begin{Lem}\label{lem:rfclosed2}
 Let $f: X \to Y$ be a morphism of curves. Let $Y$ be smooth and irreducible. Assume that every irreducible component of $X$ dominates $Y$. The set $S=\{y \in Y(\R):\, f^{-1}(\{y\}) \subset X(\R)\}$ is a closed subset of $Y(\R)$ (with respect to the Euclidean topology).
\end{Lem}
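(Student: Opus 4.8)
The strategy is to reduce Lemma \ref{lem:rfclosed2} to the already-established Lemma \ref{lem:rfclosed1}, which handles the case of a finite surjective morphism of irreducible curves with smooth target. The obstacles to applying Lemma \ref{lem:rfclosed1} directly are twofold: $X$ need not be irreducible, and $f$ need not be finite (only its components dominate $Y$, so $f$ could fail to be proper, e.g.\ if $X$ is an open subcurve). I would dispose of the reducibility issue first. Write $X = X_1 \cup \cdots \cup X_r$ for the irreducible components. Then $f^{-1}(\{y\}) = \bigcup_i f_i^{-1}(\{y\})$ where $f_i = f|_{X_i} \colon X_i \to Y$, and $f^{-1}(\{y\}) \subseteq X(\R)$ if and only if $f_i^{-1}(\{y\}) \subseteq X_i(\R)$ for every $i$. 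Hence $S = \bigcap_{i=1}^r S_i$ with $S_i = \{y \in Y(\R) : f_i^{-1}(\{y\}) \subseteq X_i(\R)\}$, and since a finite intersection of closed sets is closed, it suffices to prove each $S_i$ is closed. So we are reduced to the case where $X$ is irreducible and $f \colon X \to Y$ is dominant (this being the hypothesis "every irreducible component dominates $Y$"), with $Y$ smooth and irreducible.

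Now I need to pass from a dominant morphism of irreducible curves to a \emph{finite} one without changing the set $S$ much. The point is that $f \colon X \to Y$ dominant between irreducible curves is automatically quasi-finite. Let $\overline{X}$ be the normalization of the closure of $X$ in some projective completion — or more functorially, take the smooth projective model $\widetilde{X}$ of the function field $\R(X)$; the finite field extension $\R(Y) \hookrightarrow \R(X)$ then induces a finite surjective morphism $\widetilde{f} \colon \widetilde{X} \to \widetilde{Y}$, where $\widetilde{Y}$ is the smooth projective model of $Y$ (one may take $\widetilde{Y} = Y$ if $Y$ is already smooth projective, but in general $Y$ is only smooth, so replace it by an open subset of its smooth projective model; in any case $Y$ embeds as an open subscheme $U \subseteq \widetilde{Y}$). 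By Lemma \ref{lem:rfclosed1} applied to $\widetilde{f} \colon \widetilde{X} \to \widetilde{Y}$, the set $\widetilde{S} = \{y \in \widetilde{Y}(\R) : \widetilde{f}^{-1}(\{y\}) \subseteq \widetilde{X}(\R)\}$ is closed in $\widetilde{Y}(\R)$.

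The remaining work is to compare $\widetilde{S}$ with $S$. There is an open dense immersion $j \colon X \hookrightarrow \widetilde{X}$ compatible with $\widetilde{f} \circ j = f$ up to the identification of $Y$ with $U \subseteq \widetilde{Y}$. For $y \in Y(\R) = U(\R)$, the fiber $\widetilde{f}^{-1}(\{y\})$ is a finite set of points of $\widetilde{X}$; the fiber $f^{-1}(\{y\})$ is the subset of those lying in $X$. Thus $\widetilde{S} \cap U(\R) \subseteq S$, but the reverse inclusion can fail if a real point $y$ has a fiber point at infinity (in $\widetilde{X} \setminus X$) that is non-real. The fix is to shrink $U$: the set $Z = \widetilde{f}(\widetilde{X} \setminus X)$ is a finite subset of $\widetilde{Y}$ (as $\widetilde{X} \setminus X$ is finite and $\widetilde f$ is a morphism), and over $U' := U \setminus Z$ we have $\widetilde f^{-1}(\{y\}) = f^{-1}(\{y\})$ exactly, so $S \cap U'(\R) = \widetilde S \cap U'(\R)$ is closed in $U'(\R)$. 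To finish, I would argue that $S$ is closed in $U(\R) = Y(\R)$ by a local argument near the finitely many points of $Z \cap U(\R)$: near such a point $y_0$, either $y_0 \notin S$ (and then a whole neighborhood misses $S$ by the analogous closedness coming from Lemma \ref{lem:rfclosed1} applied to the finite morphism $\widetilde f$, since $y_0 \in \widetilde S^c$ open forces a neighborhood of $y_0$ in $\widetilde Y(\R)$ to miss $\widetilde S$, hence to miss $S$ whenever the extra fiber points at $y_0$ are already non-real — wait, this needs care) — the clean way is: $S = \widetilde S \cap Y(\R) \cap \{y : \text{no point of } \widetilde X \setminus X \text{ over } y \text{ is real}\}$ is not quite right either, so instead I simply observe $S^c \cap Y(\R) = \big(\widetilde S^c \cap Y(\R)\big) \cup \big(f^{-1}$ has a nonreal point over $y$, all such being at finite distance$\big)$, and since $\widetilde S^c$ is open and the second set is open by the same trace-form argument of Lemma \ref{lem:rfclosed1} applied to the finite morphism $\widetilde f$ restricted appropriately, $S^c$ is open. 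The main obstacle is exactly this bookkeeping at the boundary: one must make precise that a non-real fiber point is an \emph{open} condition on $y$, which again follows from positive semidefiniteness of a trace form being a closed condition, now for $\widetilde f$; hence I expect the cleanest writeup to compactify $X$ over $Y$ (using $\widetilde f \colon \widetilde X \to \widetilde Y$ as above), apply Lemma \ref{lem:rfclosed1} there, and transfer closedness through the open immersions with a short local continuity argument.
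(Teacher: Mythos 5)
Your reduction to irreducible $X$ via the components $X_i$ is exactly right and is also the paper's first step. The trouble begins with the passage to a finite morphism. You replace $X$ by the smooth projective model $\widetilde{X}$ of its function field and then assert an open dense immersion $j\colon X\hookrightarrow\widetilde{X}$. This is false unless $X$ is normal: in this paper a curve is merely a reduced one-dimensional variety, and the map from (the relevant open part of) $\widetilde{X}$ to $X$ is the normalization, which is finite and birational but not an isomorphism over the singular points of $X$. This matters for precisely the condition you are tracking: a real singular point of $X$ can have a pair of complex-conjugate preimages in $\widetilde{X}$ (e.g.\ the isolated real point at the origin of the plane curve $y^2+x^2=x^3$), so $f^{-1}(\{y\})\subseteq X(\R)$ does \emph{not} imply $\widetilde{f}^{-1}(\{y\})\subseteq\widetilde{X}(\R)$, and your identification of $f^{-1}(\{y\})$ with a subset of $\widetilde{f}^{-1}(\{y\})$ breaks down even away from the boundary. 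The paper sidesteps this entirely by invoking Zariski's main theorem, which factors the quasi-finite $f$ as a genuine open immersion $X\hookrightarrow X'$ followed by a finite morphism $X'\to Y$ with $X'$ an irreducible curve: no normalization occurs, the fibers of $f$ literally are subsets of the fibers of $f'$, and real points match exactly.

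Separately, your endgame is not yet a proof. You correctly get $S\cap U'(\R)=\widetilde S\cap U'(\R)$ after deleting a finite set, but closedness in $U'(\R)$ does not yield closedness in $Y(\R)$, and the two decompositions of $S^c$ you then try you yourself flag as not quite right; the write-up ends with a promissory note about a ``local continuity argument.'' The missing observation is a one-liner, and it is how the paper concludes: one has the inclusion $S'\subseteq S$ (images of real points under a surjection are real, so if the whole fiber of the finite model is real then so is the whole fiber of $f$), while $S\smallsetminus S'$ is contained in the image of the finite boundary $X'\smallsetminus X$ (plus, in your normalization setup, the image of the finite non-normal locus), hence is finite. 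Therefore $S$ is the union of the closed set $S'$ with a finite set and is closed. In particular the discrepancy only ever \emph{adds} points to a closed set, which is harmless; your worry about removing points, and the ensuing local analysis at the boundary, is unnecessary once the inclusion is stated in the right direction.
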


\begin{proof}
%
Since $S=Y(\R)\setminus (f(X(\C)\setminus X(\R)))$ and $X(\C)\setminus X(\R)$ is open, it suffices to show that $f: X(\C)\to Y(\C)$ is an open map (with respect to the Euclidean topology). By the open mapping theorem this is the case for smooth $X$. For the singular case let $\pi:\tilde{X}\to Y$ be the normalization map. Then $f\circ\pi$ is an open map. If $U\subset X(\C)$ is open, then $f(U)=f(\pi(\pi^{-1}(U)))=(f\circ\pi)(\pi^{-1}(U))$ is open.
\end{proof}


%
%
Now we are able to show that inside one dimensional families of projective schemes the set of hyperbolic ones is closed.

\begin{Lem}\label{lem:defalongcurve}
 Let $E \subset \pp^n$ be a linear subspace of dimension $n-k-1$. Let $C$ be a smooth irreducible curve and let $W \subset \pp^n \times C$ be a closed subscheme, flat over $C$, all of whose fibers are closed subschemes of $\pp^n$ of pure dimension $k$ that do not intersect $E$. The set $S$ of points in $C(\R)$ whose fiber is hyperbolic with respect to $E$ is closed in $C(\R)$.
\end{Lem}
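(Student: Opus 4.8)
The plan is to reduce the statement to the case of a finite morphism of curves, which is covered by Lemma \ref{lem:rfclosed2}. The $(n-k)$-dimensional linear subspaces of $\pp^n$ containing $E$ are parametrized by $\pp^k$: the linear projection $\pi_E\colon \pp^n\setminus E\to\pp^k$ away from $E$ sends such a $U$ (minus $E$) onto a point $p\in\pp^k$, and $U$ is recovered as the Zariski closure $U_p$ of $\pi_E^{-1}(p)$; this identification is defined over $\R$, so real subspaces correspond to real points. We may assume every fiber $W_c$ has dimension $k$, for otherwise (by flatness all fibers have the same Hilbert polynomial, hence the same dimension, so) no fiber is hyperbolic with respect to $E$ and $S=\emptyset$. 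Since the fibers of $W$ avoid $E$, the projection extends to a morphism $\Pi\colon W\to\pp^k\times C$ over $C$. This $\Pi$ is finite: it is proper, $W$ and $\pp^k\times C$ both being proper over $C$, and it is quasi-finite, since its fiber over $(p,c)$ is $W_c\cap U_p$ and a positive-dimensional subvariety of $U_p\cong\pp^{n-k}$ would meet the hyperplane $E\subseteq U_p$, contradicting $W_c\cap E=\emptyset$; it is surjective because a top-dimensional component of $W_c$ has dimension $k$ and hence meets $U_p$ by the projective dimension theorem. Unravelling the definition, $W_c$ is hyperbolic with respect to $E$ precisely when $\Pi^{-1}(p,c)$ consists only of real points for every $p\in\pp^k(\R)$; so setting $S_p=\{c\in C(\R):\Pi^{-1}(p,c)\subseteq W(\R)\}$ we have $S=\bigcap_{p\in\pp^k(\R)}S_p$, and it suffices to prove that each $S_p$ is closed.

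Fix $p\in\pp^k(\R)$ and put $W_p:=\Pi^{-1}(\{p\}\times C)=W\cap(U_p\times C)$, a closed subscheme of $W$, finite over $\{p\}\times C\cong C$ via $\Pi$. Then $S_p$ is the set of $c\in C(\R)$ over which the fiber of the projection $W_p\to C$ is contained in $W_p(\R)$; as this condition depends only on the underlying set, $S_p$ is the real-fibered locus of $(W_p)_{\mathrm{red}}\to C$. Since $C$ is smooth and irreducible, Lemma \ref{lem:rfclosed2} shows that $S_p$ is closed once we know that every irreducible component of $W_p$ dominates $C$ (so that $(W_p)_{\mathrm{red}}$ is a curve all of whose components dominate $C$). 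When $W$ is equidimensional of dimension $k+1$ this follows at once: $W_p$ is cut out in $W$ by the $k$ linear forms defining $U_p$, so every component of $W_p$ has dimension at least $1$ by Krull's height theorem, whereas a component failing to dominate $C$ would be contained in some fiber $W_{c_0}\cap U_p$, which is finite — a contradiction. Hence in this case $S=\bigcap_p S_p$ is closed, being an intersection of closed sets.

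The remaining difficulty, which I expect to be the main one, is that $W$ need not be equidimensional: a degree-$k$ Hilbert polynomial only forces the top-dimensional part of each fiber $W_c$ to have dimension $k$, so $W_c$ may carry components of smaller dimension, and then $W_p$ can acquire zero-dimensional components over isolated values of $c$, which breaks the hypothesis of Lemma \ref{lem:rfclosed2}. I would handle this by writing $W=W^{\mathrm{top}}\cup W^{\mathrm{low}}$, where $W^{\mathrm{top}}$ is the union of the $(k+1)$-dimensional components and $W^{\mathrm{low}}$ the union of the lower-dimensional ones (each of which still dominates $C$, by flatness); then $S$ is the intersection of the hyperbolicity locus attached to $W^{\mathrm{top}}$ — closed by the argument above, since $W^{\mathrm{top}}$ is pure-dimensional — with the one attached to $W^{\mathrm{low}}$. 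Proving that this last locus is closed is the crux; I expect it to be dealt with by induction on the fiber dimension, or, in the spirit of the other closedness results in this section, by invoking the curve selection lemma to reduce once again to a finite morphism of curves and applying Lemmas \ref{lem:rfclosed1} and \ref{lem:rfclosed2}.
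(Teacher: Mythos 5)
Your main line of argument is the one the paper uses: project from $E$ to get $f=(\pi\times\mathrm{id})|_W\colon W\to\pp^k\times C$, write $S=\bigcap_{p\in\pp^k(\R)}S_p$, and reduce each $S_p$ to Lemma \ref{lem:rfclosed2} by showing that every irreducible component of $f^{-1}(\{p\}\times C)$ dominates $C$. Your justification of this last point in the equidimensional case --- cut $W$ by the $k$ linear forms defining $U_p$, invoke Krull's height theorem, and rule out components lying in a single fiber because $W_{c_0}\cap U_p$ is finite --- is a correct, slightly more hands-on substitute for the paper's Lemma \ref{lem:curvefibercurve2}; your observation that $f$ is in fact finite (proper plus quasi-finite) is also correct, though only quasi-finiteness is needed. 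Up to this point the proposal is sound.

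The gap is that the non-equidimensional case is not actually proved: you decompose $W=W^{\mathrm{top}}\cup W^{\mathrm{low}}$ and then only announce that the locus attached to $W^{\mathrm{low}}$ should follow ``by induction on the fiber dimension'' or ``by curve selection,'' with no argument. This is not a removable formality. If $W^{\mathrm{low}}$ has a component of dimension $d<k+1$, then $f^{-1}(\{p\}\times C)$ can acquire isolated zero-dimensional components over special values of $p$ and $c$; each $S_p$ is then a closed set minus finitely many points, so the individual $S_p$ need no longer be closed and the ``intersection of closed sets'' structure you rely on collapses --- one would have to argue directly about $\bigcap_p S_p$, which is exactly the delicate point (a low-dimensional component whose image in $\pp^k$ meets $\pp^k(\R)$ only over isolated parameters is the problematic configuration). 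For comparison, the paper disposes of this in one sentence, asserting that flatness of $W$ over $C$ forces every irreducible component of $W$ to dominate $\pp^k\times C$; flatness over a curve only yields domination of $C$, so that assertion amounts to assuming no lower-dimensional components occur. Your instinct that this is ``the crux'' is therefore right, but the proposal stops exactly where the work starts: as written it establishes the lemma only under the additional hypothesis that $W$, equivalently each fiber $W_c$, is pure of dimension $k+1$ (resp.\ $k$). Either justify the domination claim in general or supply the missing argument for $W^{\mathrm{low}}$.
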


\begin{proof}
 Let $\pi: \pp^n\smallsetminus E \to \pp^k$ be the linear projection from center $E$. The induced map $f=(\pi \times \textnormal{id})|_W: W \to \pp^k \times C$ is quasi-finite and dominant because no fiber of $f$ intersects $E$. Moreover, every irreducible component of $W$ dominates $\pp^k \times C$ by Lemma \ref{lem:irreddim}. For all $y \in \pp^k(\R)$ let $S_y=\{z\in C(\R):\, f^{-1}(\{(y,z)\})\subset W(\R)\}$. Every irreducible component of $f^{-1}(\{y\}\times C)$ dominates $\{y\}\times C$ by Lemma \ref{lem:curvefibercurve2}. Thus $S_y$ is closed by Lemma \ref{lem:rfclosed2}. Since $S=\cap_{y\in \pp^k(\R)} S_y$, the claim follows.
%
\end{proof}

\begin{example}
 Lemma \ref{lem:defalongcurve} becomes false if we drop the assumption on the fibers being equidimensional. Indeed, let $e=(0:0:1)\in\pp^2$ and $C=\A^1$. Consider the closed subscheme $W$ of $\pp^2 \times \A^1$ that is cut out by $x_1x_2-\epsilon x_2^2$ and $x_0^2 x_2+x_2^3$ where $\epsilon$ is the parameter of $\A^1$. Every fiber is the disjoint union of the line $x_2=0$ with the pair $P=\{(1:\epsilon\cdot \textrm{i}:\textrm{i}),(1:-\epsilon\cdot \textrm{i}:-\textrm{i})\}$ of complex conjugate points. In particular, no fiber contains $e$ and $W$ is flat over $\A^1$ since each fiber has the same Hilbert polynomial $T+3$. Moreover, the (real) line spanned by the two points from $P$ contains $e$ if and only if $\epsilon=0$. Since the other component is a real line, the set of points in $\A^1(\R)$ whose fiber is hyperbolic with respect to $e$ is $S=\{\epsilon\in\A^1:\,\epsilon\neq0\}$ and therefore not closed.
\end{example}

The next lemma shows that indeed it suffices to look at one-dimensional families. This is done via the curve selection lemma from semi-algebraic geometry.

\begin{Lem}\label{lem:curvesel2}
 Let $X$ be a quasi-projective variety over $\R$. Let $S \subset X(\R)$ be a semialgebraic subset.
 \begin{enumerate}[(i)]
  \item If $C(\R)\cap S$ is closed in $C(\R)$ for every irreducible curve $C\subset X$, then $S$ is closed in $X(\R)$.
  \item If $C(\R)\cap S$ is open in $C(\R)$ for every irreducible curve $C\subset X$, then $S$ is open in $X(\R)$.
 \end{enumerate}
\end{Lem}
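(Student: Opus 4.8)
The plan is to deduce the statement directly from the curve selection lemma of real algebraic geometry. First observe that (ii) reduces to (i): the complement $T:=X(\R)\smallsetminus S$ is again semialgebraic, and for every irreducible curve $C\subseteq X$ we have $C(\R)\cap T=C(\R)\smallsetminus(C(\R)\cap S)$, which is closed in $C(\R)$ as soon as $C(\R)\cap S$ is open in $C(\R)$. So under the hypothesis of (ii), part (i) applied to $T$ gives that $T$ is closed in $X(\R)$, i.e.\ that $S$ is open. Hence it suffices to prove (i).

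For (i) I would argue by contraposition. Assume $S$ is not closed in $X(\R)$ and pick a point $p\in X(\R)$ in the Euclidean closure of $S$ with $p\notin S$. Fix an affine open $U\subseteq X$ containing $p$. Since $X\smallsetminus U$ is Zariski closed, hence Euclidean closed, $U(\R)$ is a Euclidean open neighbourhood of $p$ in $X(\R)$; therefore $p$ lies in the Euclidean closure of the semialgebraic set $S\cap U(\R)$, and for any irreducible curve $C$ contained in $U$ (hence in $X$) we have $C(\R)\cap(S\cap U(\R))=C(\R)\cap S$. We may thus replace $X$ by $U$ and $S$ by $S\cap U(\R)$, i.e.\ assume that $X$ is affine, say a Zariski closed subvariety of $\A^N$.

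Now the curve selection lemma provides a continuous semialgebraic arc $\gamma\colon[0,1)\to X(\R)$, real-analytic on $(0,1)$, with $\gamma(0)=p$ and $\gamma(t)\in S$ for all $t\in(0,1)$; since $p\notin S$ this arc is non-constant. Let $C\subseteq X$ be the Zariski closure of the image $\gamma((0,1))$, that is, $C=V(I)$ with $I=\{g\in\R[x_1,\dots,x_N]:g\circ\gamma\equiv 0\text{ on }(0,1)\}$. Because the coordinate functions $\gamma_1,\dots,\gamma_N$ are Nash functions, $I$ is the kernel of the $\R$-algebra homomorphism that sends each $x_i$ to the germ of $\gamma_i$ at a fixed point of $(0,1)$; in particular $I$ is prime, so $C$ is irreducible, and since some $\gamma_i$ is non-constant, hence transcendental over $\R$, we get $\dim C=1$. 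Thus $C$ is an irreducible curve contained in $X$. As $C$ is Zariski closed in $\A^N$, its real locus $C(\R)$ is Euclidean closed, and $\gamma(t)\to p$ with $\gamma(t)\in C(\R)$ shows $p\in C(\R)$; on the other hand $\gamma((0,1))\subseteq C(\R)\cap S$ accumulates at $p$ while $p\notin S$. Hence $C(\R)\cap S$ is not closed in $C(\R)$, contradicting the hypothesis of (i).

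The only substantive ingredient is the curve selection lemma, for which I would cite a standard reference such as Bochnak--Coste--Roy; the extraction of an honest algebraic curve from the Nash arc via the transcendence-degree computation, the reduction to the affine case, and the complementation argument for (ii) are all routine, so I do not anticipate a genuine obstacle. The one point worth a word of care is the non-constancy of $\gamma$ — this is what forces $\dim C=1$, and it is guaranteed precisely because $p\notin S$.
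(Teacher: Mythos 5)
Your argument is correct and follows essentially the same route as the paper's proof: reduce (ii) to (i), argue the contrapositive of (i) via the curve selection lemma, and take the Zariski closure of the arc's image to produce the offending curve. The only cosmetic difference is that you establish irreducibility and one-dimensionality of $C$ directly (via the Nash/transcendence argument), whereas the paper bounds the dimension by citing \cite[Thm.\ 2.8.8]{BCR} and then passes to an irreducible component of $C$ on which closedness fails.
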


\begin{proof}
 First we note that $(ii)$ follows directly from $(i)$. We prove the contrapositive of $(i)$: Let $x \in \overline{S} \smallsetminus S$. By the curve selection lemma \cite[Thm. 2.5.5]{BCR} there is a continuous semialgebraic map $f: [0,1] \to X(\R)$ such that $f(0)=x$ and $f(]0,1])\subset S$. Let $C$ be the Zariski closure of $f(]0,1])$. This is a curve with $x \in \overline{C(\R)\cap S}$ (the dimension does not increase by \cite[Thm. 2.8.8]{BCR}). Thus $C(\R)\cap S$ is not closed in $C(\R)$. There is also an irreducible component $C_0$ of $C$ such that $C_0(\R)\cap S$ is not closed.
\end{proof}

Putting everything together we get the following theorem.

\begin{Thm} \label{thm:hyperbolic_locus_closed}
 Let $E \subset \pp^n$ be a linear subspace of dimension $n-k-1$. Let $H$ be the Hilbert scheme of closed subschemes of $\pp^n$ with a given Hilbert polynomial of degree $k$. Let $H' \subset H$ be the subset consisting of all points corresponding to subschemes of pure dimension $k$ that do not intersect $E$. The subset $S$ of $H'(\R)$ corresponding to subschemes which are hyperbolic with respect to $E$ is closed with respect to the classical topology.
\end{Thm}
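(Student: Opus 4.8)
The plan is to reduce the statement to the one-parameter version already proved in Lemma~\ref{lem:defalongcurve}, using the curve-selection machinery packaged in Lemma~\ref{lem:curvesel2}. Let $\mathcal{W}\subseteq\pp^n\times H$ be the universal family over the Hilbert scheme; over $H'$ it is flat and, by definition of $H'$, all of its fibers are subschemes of $\pp^n$ disjoint from $E$. Since the classical topology on $H'(\R)$ and the subset $S$ depend only on the reduced structure, we may treat $H'$ as a quasi-projective variety and apply Lemma~\ref{lem:curvesel2}; to do so we must first know that $S$ is semialgebraic, and then verify the hypothesis of part $(i)$, namely that $C(\R)\cap S$ is closed in $C(\R)$ for every irreducible curve $C\subseteq H'$.

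For semialgebraicity: a point $x\in H'(\R)$ fails to lie in $S$ exactly when there is a point $p$ of the complexification of the fiber $X_x$ with $p\notin\pp^n(\R)$ whose image under the linear projection $\pi\colon\pp^n\smallsetminus E\to\pp^k$ is real. Viewing $\pp^n(\C)$ as a real algebraic set in the standard charts, and using that $\mathcal{W}$ is cut out Zariski-locally by equations whose coefficients are regular functions on $H'$, the set of such pairs $(p,x)$ is semialgebraic, and its image under the projection to $H'(\R)$ is $H'(\R)\smallsetminus S$; hence $S$ is semialgebraic by the Tarski--Seidenberg theorem. (Equivalently, hyperbolicity of $X_x$ is a first-order property of the coordinates of $x$ over the ordered field $\R$, via the trace-form criterion recalled before Lemma~\ref{lem:rfclosed1} applied fiberwise to $\mathcal{W}\to\pp^k\times H'$.)

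Now fix an irreducible curve $C\subseteq H'$ and let $\nu\colon\widetilde C\to C$ be its normalization: a smooth irreducible curve with a finite surjective morphism to $C$ that restricts to an isomorphism over the complement of the finite singular locus of $C$. Pulling $\mathcal{W}$ back along $\widetilde C\to C\hookrightarrow H'$ produces a closed subscheme $W\subseteq\pp^n\times\widetilde C$ which is flat over $\widetilde C$ (flatness is stable under base change) and all of whose fibers are disjoint from $E$. Lemma~\ref{lem:defalongcurve} then applies and shows that the set $\widetilde S$ of points of $\widetilde C(\R)$ whose fiber is hyperbolic with respect to $E$ is closed in $\widetilde C(\R)$. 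It remains to push this forward: for a real point $\widetilde c$ of $\widetilde C$ its image $\nu(\widetilde c)$ is a real point of $C$, the residue fields are both $\R$, and the fiber of $W$ over $\widetilde c$ is literally the subscheme $X_{\nu(\widetilde c)}$; hence $\widetilde S=\nu^{-1}(C(\R)\cap S)$. Since $\nu$ is finite, hence proper, the induced map $\widetilde C(\R)\to C(\R)$ is closed in the classical topology, so $\nu(\widetilde S)$ is closed in $C(\R)$. Because $\nu$ is an isomorphism over the smooth locus of $C$, the set $C(\R)\smallsetminus\nu(\widetilde C(\R))$ is finite, so $C(\R)\cap S$ differs from the closed set $\nu(\widetilde S)$ by at most finitely many points and is therefore closed in $C(\R)$. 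Invoking Lemma~\ref{lem:curvesel2}$(i)$ finishes the proof.

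I expect the real work to be in the reduction itself rather than in any single computation: passing from the global, many-parameter problem to the one-parameter Lemma~\ref{lem:defalongcurve} forces one through the curve selection lemma — and hence through the verification that $S$ is semialgebraic — and then through a normalization in order to secure the smoothness hypothesis of Lemma~\ref{lem:defalongcurve}, at the cost of having to account for the finitely many real singular points of $C$ that the normalization may fail to reach.
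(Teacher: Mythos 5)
Your proof is correct and follows essentially the same route as the paper: reduce to irreducible curves via Lemma~\ref{lem:curvesel2}, pass to the normalization to gain smoothness, apply Lemma~\ref{lem:defalongcurve}, and descend back to $C(\R)$. The only differences are cosmetic refinements — you verify semialgebraicity of $S$ explicitly via Tarski--Seidenberg and handle the finitely many real points of $C$ possibly missed by $\widetilde C(\R)$ by hand, whereas the paper cites a reference for surjectivity and closedness of $\widetilde C(\R)\to C(\R)$.
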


\begin{proof}
 Let $C \subset H'$ be an irreducible curve. By Lemma \ref{lem:curvesel2} it suffices to show that $C(\R)\cap S$ is closed in $C(\R)$. Let $\pi:\widetilde{C}\to C$ be the normalization map. Since the induced map $\widetilde{C}(\R)\to C(\R)$ is surjective and closed (cf. \cite[Prop. 4.2 and 4.3]{And96}), it suffices to show that $\pi^{-1}(C(\R)\cap S)$ is closed in $\widetilde{C}(\R)$. But since $\widetilde{C}$ is smooth, this follows from the universal property of the Hilbert scheme and Lemma \ref{lem:defalongcurve}.
\end{proof}

\begin{Bem}
 The set of points in the Hilbert scheme corresponding to equidimensional schemes is closed whereas the set of points in the Hilbert scheme corresponding to schemes that do not intersect a certain linear subspace is open. Thus the set $H'$ considered in Theorem \ref{thm:hyperbolic_locus_closed} is locally closed.
\end{Bem}

\section{Strict Hyperbolic Varieties} \label{sec:open}

In this section, we want to show that smooth hyperbolic varieties lie in the interior of the set of all hyperbolic varieties. Whereas the rough idea is the same as in the previous section, some subtleties arise that do not enable us to prove the results of both sections in a unified way.

\begin{Def}
 Let $f: X \to Y$ be a morphism. The \textit{branch locus} of $f$ is the set of all points $y \in Y$ such that the fiber $X_y=X\times_Y \kappa(y)$ is not reduced. This is in fact $f(\Supp\Omega_{X/Y})$, since the ramification locus is precisely the support of the relative differentials.
\end{Def}

Recall for example from \cite[\S20, Thm. 9]{Lor08} or \cite[Thm. 2.1]{peder} that if $A$ is a finite $\R$-algebra, then the bilinear form $A\times A\to \R,\,(x,y)\mapsto\tr_{A/\R}(xy)$ is positive semidefinite if and only if $\Spec A$ consists only of $\R$-points and it is nondegenerate if and only if $A$ is reduced, the latter being equivalent to the morphism $\Spec A\to\Spec\R$ being unramified. 

\begin{Lem}\label{lem:rfopenflat}
 Let $f: X \to Y$ be a finite flat surjective morphism of irreducible varieties. The set \[S=\{y \in Y(\R):\, 
    f^{-1}(y) \subset X(\R)\}\smallsetminus f(\Supp \Omega_{X/Y})\}\]is an open subset of $Y(\R)$.
\end{Lem}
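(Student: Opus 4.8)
The plan is to identify $S$ with the locus where a suitable trace form is positive definite, and then to conclude from the openness of the positive-definite cone. Since openness is local on $Y(\R)$ and $f$ is finite flat, I may first restrict to an affine open $\Spec A\subseteq Y$ small enough that $X$ becomes $\Spec B$ with $B$ a free $A$-module of some finite rank $n$; this is possible because a finite flat module over a Noetherian ring is finitely generated projective, hence free on a cover by principal open subsets. Fix an $A$-basis $e_1,\dots,e_n$ of $B$ and form the Gram matrix $M=\bigl(\tr_{B/A}(e_ie_j)\bigr)_{i,j}\in\Sym_n(A)$ of the trace form of $B$ over $A$. For $y\in Y(\R)$ let $M(y)\in\Sym_n(\R)$ be the matrix obtained by evaluating the entries of $M$ at $y$. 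The heart of the argument is the claim that $y\in S$ if and only if $M(y)$ is positive definite.

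For the claim, the key input is that the trace is compatible with base change along free modules: $M(y)$ is exactly the Gram matrix, in the basis $\bar e_1,\dots,\bar e_n$, of the trace bilinear form of the finite $\R$-algebra $C_y:=B\otimes_A\kappa(y)=B/\fm_yB$, where $\fm_y\subseteq A$ is the maximal ideal of $y$ (so $\kappa(y)=\R$ and $\dim_\R C_y=n$). Now invoke the characterization recalled before Lemma \ref{lem:rfclosed1}: $M(y)$ is positive semidefinite precisely when $\Spec C_y$ consists only of $\R$-points, that is, precisely when $f^{-1}(y)\subseteq X(\R)$; and $M(y)$ is nondegenerate precisely when $C_y$ is reduced. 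By the definition of the branch locus, the fibre $X_y=\Spec C_y$ is reduced exactly when $y\notin f(\Supp\Omega_{X/Y})$. Since a real symmetric matrix is positive definite iff it is simultaneously positive semidefinite and nondegenerate, the two defining conditions of $S$ hold together at $y$ iff $M(y)$ is positive definite, as claimed.

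To finish: the entries of $M$ are regular functions on $Y$, so $y\mapsto M(y)$ is continuous on $Y(\R)$ in the classical topology, and the set of positive definite symmetric $n\times n$ matrices is open in $\Sym_n(\R)$; hence $S$ is open on the chosen affine chart, and since such charts cover $Y$, the set $S$ is open in $Y(\R)$. I do not expect any deep obstacle here; the points needing care are the base-change compatibility of the trace form and the correct use of the recalled dictionary between positive (semi)definiteness and the geometry of $\Spec C_y$. Conceptually, the role of deleting the branch locus is precisely to turn the closed condition ``positive semidefinite'' into the open condition ``positive definite'', which is why the analogue without this deletion (Lemma \ref{lem:rfclosed1}) gives only closedness rather than openness.
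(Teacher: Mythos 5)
Your proposal is correct and follows essentially the same route as the paper: reduce to the affine case with $B$ free over $A$, and observe that $S$ is exactly the locus where the trace form is positive definite, which is open. The paper states this in two lines; you have merely (and accurately) filled in the base-change compatibility of the trace form and the dictionary between positive definiteness and the fibre being reduced with only real points.
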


\begin{proof}
 We can restrict to the case where $Y=\Spec A$, $X=\Spec B$ and $B$ is free as $A$-module. But then $S$ is just the set of points where the trace bilinear form $B\times B\to A,\, (a,b)\mapsto \tr_{B/A}(ab)$ is positive definite. This is an open set.
\end{proof}

%
We want to decude the statement of the previous lemma for not necessarily flat morphisms $X\to Y$ which we will need later. 

\begin{Lem}\label{lem:rfopencurves}
 Let $f: X \to Y$ be a finite morphism of schemes where $Y$ is an irreducible curve. The set \[S=\{y \in Y(\R):\, 
    f^{-1}(y)\subset X(\R)\}\smallsetminus f(\Supp \Omega_{X/Y})\}\]is an open subset of $Y(\R)$. 
\end{Lem}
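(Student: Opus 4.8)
The plan is to localise on $Y$ and reduce to the evident fact that a finite morphism which is, locally on the source, a closed immersion into $Y$ can have no non-real point in any fibre over a real point of $Y$. It suffices to fix a point $y_0\in S$ and produce an open neighbourhood of $y_0$ in $Y(\R)$ contained in $S$. Since $f$ is finite it is a closed map, so $f(\Supp\Omega_{X/Y})$ is closed in $Y$; as $y_0\in S$ we have $y_0\notin f(\Supp\Omega_{X/Y})$, so after replacing $Y$ by the open subscheme $Y\smallsetminus f(\Supp\Omega_{X/Y})$ and $X$ by its preimage, we may assume that $\Omega_{X/Y}=0$, i.e. that $f$ is unramified everywhere, and hence that $S=\{y\in Y(\R):\,f^{-1}(y)\subseteq X(\R)\}$.

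I would then analyse the fibre over $y_0$. As $f$ is finite, $f^{-1}(y_0)=\{x_1,\dots,x_r\}$ is a finite set of closed points, each of which lies in $X(\R)$ because $y_0\in S$; thus $\kappa(x_i)=\R=\kappa(y_0)$. Since $f$ is unramified at $x_i$ one has $\fm_{y_0}\cO_{X,x_i}=\fm_{x_i}$, so the fibre $\cO_{X,x_i}\otimes_{\cO_{Y,y_0}}\kappa(y_0)=\cO_{X,x_i}/\fm_{x_i}=\kappa(x_i)$ is generated over $\kappa(y_0)$ by the image of $1$; as $\cO_{X,x_i}$ is a finite $\cO_{Y,y_0}$-module, Nakayama's lemma shows that $\cO_{Y,y_0}\to\cO_{X,x_i}$ is surjective. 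Spreading this out (using that $f$ is affine and of finite type) I would get, for each $i$, affine open neighbourhoods $x_i\in V_i\subseteq X$ and $y_0\in W_i\subseteq Y$ with $f(V_i)\subseteq W_i$, with $V_i\cap f^{-1}(y_0)=\{x_i\}$, and such that $f|_{V_i}\colon V_i\to W_i$ is a closed immersion.

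Finally I would assemble the neighbourhood. The set $X\smallsetminus(V_1\cup\dots\cup V_r)$ is closed in $X$ and contains none of the $x_i$, so, $f$ being closed, $f\bigl(X\smallsetminus(V_1\cup\dots\cup V_r)\bigr)$ is closed in $Y$ and does not contain $y_0$; put $W'=(W_1\cap\dots\cap W_r)\smallsetminus f\bigl(X\smallsetminus(V_1\cup\dots\cup V_r)\bigr)$, an open subscheme of $Y$ with $y_0\in W'$ and $f^{-1}(W')\subseteq V_1\cup\dots\cup V_r$. For $y\in W'(\R)$ we then have $f^{-1}(y)=\bigcup_i\bigl(f|_{V_i}\bigr)^{-1}(y)$, and since each $f|_{V_i}$ is a closed immersion into $W_i\supseteq W'$, each set $\bigl(f|_{V_i}\bigr)^{-1}(y)$ is empty or a single point with residue field $\kappa(y)=\R$; hence $f^{-1}(y)\subseteq X(\R)$, i.e. $y\in S$. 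Therefore $W'(\R)\subseteq S$ is an open neighbourhood of $y_0$ in $Y(\R)$, and $S$ is open. The step requiring the most care is the spreading-out in the second paragraph—separating the finitely many points of $f^{-1}(y_0)$ and ruling out stray points of $X$ over a small neighbourhood of $y_0$—both of which rest on $f$ being affine and closed. (One could instead restrict to the flat locus of $f$, which is the complement of finitely many points of $Y$, and apply Lemma~\ref{lem:rfopenflat} there; but the argument above is more direct.)
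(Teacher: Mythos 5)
There is a genuine gap here, and in fact the overall strategy cannot work. If your argument were correct it would produce, for every $y_0\in S$, a \emph{Zariski}-open $W'\subseteq Y$ with $y_0\in W'$ and $W'(\R)\subseteq S$. This is false: take $f\colon\A^1\to\A^1$, $s\mapsto s^2$, for which $S=\{t\in\R:\,t>0\}$ (for $t<0$ the fibre is a single closed point with residue field $\C$, and $t=0$ is the image of the ramification point). Any Zariski-open neighbourhood of $t=1$, even after discarding $f(\Supp\Omega_{X/Y})=\{0\}$, is cofinite in $\A^1(\R)=\R$ and hence contains negative reals, so no such $W'$ exists. Openness of $S$ is an honestly classical-topology, i.e.\ positivity, phenomenon; the paper obtains it from the fact that positive \emph{definiteness} of the trace form is an open condition (Lemma~\ref{lem:rfopenflat}), after first reducing to a finite flat situation by replacing $B$ with the flat subring $A[b]$ generated by a primitive element.

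The step that breaks is the Nakayama/spreading-out paragraph. First, $\cO_{X,x_i}$ is in general \emph{not} a finite $\cO_{Y,y_0}$-module: what is finite over $A_{\fm}=\cO_{Y,y_0}$ is the semilocal ring $(f_*\cO_X)_{y_0}=B\otimes_A A_{\fm}$, and localizing it further at one of its several maximal ideals destroys finiteness. In the example, with $y_0=(t=1)$ and $x_1=(s=1)$, the ring $\R[s]_{(s-1)}$ contains $1/(s+1)=(s-1)/(t-1)$, whose minimal polynomial over $\R(t)$ is $z^2-\tfrac{2}{1-t}z+\tfrac{1}{1-t}$, so it is not even integral over $\R[t]_{(t-1)}$; and $\cO_{Y,y_0}\to\cO_{X,x_1}$ is not surjective ($s=g(s^2)/h(s^2)$ is impossible by comparing odd and even parts). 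Consequently the claimed local structure --- Zariski-opens $V_i\ni x_i$, $W_i\ni y_0$ with $f|_{V_i}\colon V_i\to W_i$ a closed immersion --- does not exist: any such $V_1$ and $W_1$ contain $\pm a$ and $a^2$ respectively for all but finitely many $a$, so $f|_{V_1}$ is generically two-to-one. The statement you are reaching for (a finite unramified morphism with trivial residue field extensions over $y_0$ splits into a disjoint union of closed immersions) is true only after passing to the Henselization of $\cO_{Y,y_0}$, i.e.\ étale-locally or on classical (not Zariski) neighbourhoods. Your closing parenthetical --- reduce to the flat locus and invoke Lemma~\ref{lem:rfopenflat} --- is essentially the paper's route and is the one to carry out; note that the paper's version also needs the integral/reduced/component reductions, since Lemma~\ref{lem:rfopenflat} assumes $X$ irreducible.
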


\begin{proof}
 First we consider the case where $X$ is integral. We can restrict to the case where $f$ is surjective and $Y=\Spec A$, $X=\Spec B$ and $B$ is finitely generated as $A$-module. Let $K=\Quot(A)$ and $L=\Quot(B)$. Let $b \in B$ be an element whose minimal polynomial has coefficients in $A$ and degree $[L:K]$. Then the ring extension $A \subset A[b]$ is flat and $L=\Quot(A[b])$. Letting $X'=\Spec A[b]$ we find that $f$ factors as $f=g \circ h$ where $h: X \to X'$ is surjective and birational and $g: X' \to Y$ is finite and flat. The set \[S'=\{y \in Y(\R):\, 
 g^{-1}(y)\subset X'(\R)\}\smallsetminus g(\Supp \Omega_{X'/Y})\]is an open subset of $Y(\R)$ by Lemma \ref{lem:rfopenflat}. Clearly, $S$ is $S'$ minus a finite set of points and thus is also open.

 Now we consider the case where $X$ is irreducible. Let $X_{\rm red}$ be the reduced induced subscheme structure on $X$ and let $f': X_{\rm red} \to Y$ be the induced morphism. By the previous step, we have that the set \[S''=\{y \in Y(\R):\, 
 f'^{-1}(y)\subset X_{\rm red}(\R)\}\smallsetminus f'(\Supp \Omega_{X_{\rm red}/Y})\]is an open subset of $Y(\R)$. Since $S=S''\smallsetminus f(\Supp(\Omega_{X/Y}))$ and since the latter set is closed, $S$ is open.
 
 In the general case let $X_1,\ldots,X_r$ be the irreducible components of $X$ and let $f_i:X_i\to Y$ be the induced morphisms. Then $S$ is the intersection of the open sets \[\{y \in Y(\R):\, 
 f_i^{-1}(y)\subset X_{i}(\R)\}\smallsetminus f_i(\Supp \Omega_{X_{i}/Y})\]minus a finite set of points and therefore open.
\end{proof}

The previous lemma extends to the case of finite morphisms between arbitrary varieties by the curve selection lemma.

\begin{Lem}\label{lem:rfopen}
 Let $f: X \to Y$ be a finite morphism of varieties. The set \[S=\{y \in Y(\R):\, 
 f^{-1}(y)\subset X(\R)\}\smallsetminus f(\Supp \Omega_{X/Y})\]is an open subset of $Y(\R)$.
\end{Lem}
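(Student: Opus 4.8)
The plan is to bootstrap from the curve case, Lemma~\ref{lem:rfopencurves}, via the same curve-selection mechanism used in the proof of Theorem~\ref{thm:hyperbolic_locus_closed}. Since openness is local on $Y$, we may assume $Y$, and hence $X$, affine, so that Lemma~\ref{lem:curvesel2} is applicable. First I would record that $S$ is semialgebraic: the set $\{y \in Y(\R) : f^{-1}(y) \subseteq X(\R)\}$ is the complement in $Y(\R)$ of the image of the semialgebraic set $X(\C) \smallsetminus X(\R)$ under the (polynomial, hence semialgebraic) map on complex points induced by $f$ — here one uses that a closed point of the scheme-theoretic fibre over a real point of $Y$ is real exactly when it supports no non-real $\C$-point, and invokes Tarski--Seidenberg — while $f(\Supp \Omega_{X/Y})$ is constructible in $Y$, so its real points are semialgebraic. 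Hence by Lemma~\ref{lem:curvesel2}(ii) it suffices to show that $C(\R) \cap S$ is open in $C(\R)$ for every irreducible curve $C \subseteq Y$.

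So fix an irreducible curve $\iota : C \hookrightarrow Y$ and pass to the base change $X_C := X \times_Y C$, with first projection $p : X_C \to X$ and induced morphism $f_C : X_C \to C$. Finiteness is stable under base change, so $f_C$ is finite, and $C$ is an irreducible curve; hence Lemma~\ref{lem:rfopencurves} applies and
\[
S_C := \{y \in C(\R) :\, f_C^{-1}(y) \subseteq X_C(\R)\} \smallsetminus f_C(\Supp \Omega_{X_C/C})
\]
is open in $C(\R)$. It remains to identify $S_C$ with $C(\R) \cap S$ (under the identification of $C(\R)$ with its image in $Y(\R)$). This rests on two compatibilities: (a) for $y \in C$ the fibre $f_C^{-1}(y) = X \times_Y \Spec \kappa(y)$ is canonically isomorphic over $\R$ to $f^{-1}(\iota(y))$, so the two real-fibre conditions coincide; and (b) the branch locus commutes with this base change, $f_C(\Supp \Omega_{X_C/C}) = \iota^{-1}\bigl(f(\Supp \Omega_{X/Y})\bigr)$. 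For (b) I would use the base-change formula $\Omega_{X_C/C} \cong p^{*}\Omega_{X/Y}$, whence $\Supp \Omega_{X_C/C} = p^{-1}(\Supp \Omega_{X/Y})$ (the support of a coherent sheaf pulls back to the preimage of the support), together with the fact that the square with $\iota \circ f_C = f \circ p$ is Cartesian, so that $|X_C| \to |X| \times_{|Y|} |C|$ is surjective; these combine to give the asserted equality of images. Putting (a) and (b) together yields $S_C = C(\R) \cap S$, and Lemma~\ref{lem:curvesel2}(ii) concludes the proof.

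The bulk of this is purely formal; the two spots requiring a moment of care are the verification that $S$ is genuinely semialgebraic (so that Lemma~\ref{lem:curvesel2} legitimately applies) and the base-change compatibility (b) of the branch locus — in particular, remembering that forming the image of $\Supp \Omega$ needs the surjectivity of $|X_C| \to |X| \times_{|Y|} |C|$, and that $p^{-1}(\Supp \Omega_{X/Y})$ is indeed all of $\Supp p^{*}\Omega_{X/Y}$. Neither point is hard, but both are easy to overlook.
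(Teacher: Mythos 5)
Your proposal is correct and follows exactly the paper's route: reduce to curves via Lemma~\ref{lem:curvesel2}(ii), base change to $X\times_Y C$, and invoke Lemma~\ref{lem:rfopencurves}. The paper's proof is just a terser version of yours; the details you supply (semialgebraicity of $S$, and the base-change compatibility of the fibres and of $f(\Supp\Omega_{X/Y})$ — the latter being closed anyway since $\Supp\Omega_{X/Y}$ is closed and $f$ is finite, hence proper) are exactly the points the paper leaves implicit.
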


\begin{proof}
 Let $C \subset Y$ be an irreducible curve. By Lemma \ref{lem:curvesel2} it suffices to show that $C(\R)\cap S$ is open in $C(\R)$. Let $\pi: X\times_Y C\to C$ be the projection. Then $C(\R)\cap S$ equals the set \[\{y \in C(\R):\, 
 \pi^{-1}(y)\subset (X\times_Y C)(\R)\}\smallsetminus \pi(\Supp \Omega_{X\times_Y C/C})\] which is open.
\end{proof}

Now we are able to prove the main theorem of this section.

\begin{Thm} \label{thm:strictly_hyperbolic_open}
 Let $E \subset \pp^n$ be a linear subspace of dimension $n-k-1$. Let $H$ be the Hilbert scheme of closed subschemes of $\pp^n$ with a given Hilbert polynomial. Let $H' \subset H$ be the open subset consisting of all points corresponding to $k$-dimensional subschemesת that do not intersect $E$. The subset $S$ of $H'(\R)$ corresponding to subschemes which are hyperbolic with respect to $E$ such that the projection from $E$ is unramified at real points
 is open with respect to the classical topology.
\end{Thm}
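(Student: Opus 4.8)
The plan is to reduce the statement, exactly as in the proof of Theorem \ref{thm:hyperbolic_locus_closed}, to the curve-wise criterion of Lemma \ref{lem:curvesel2}(ii). So let $C\subseteq H'$ be an irreducible curve; it suffices to show that $C(\R)\cap S$ is open in $C(\R)$. Passing to the normalization $\pi\colon\widetilde C\to C$ and using that $\widetilde C(\R)\to C(\R)$ is a surjective closed (hence also open, being a local homeomorphism away from finitely many points, or by \cite[Prop. 4.2 and 4.3]{And96}) map, it is enough to prove that $\pi^{-1}(C(\R)\cap S)$ is open in $\widetilde C(\R)$; and $\widetilde C$ is a smooth curve. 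So I may assume from the start that I am working over a smooth irreducible curve $C$ together with a flat family $W\subseteq \pp^n\times C$ over $C$, all of whose fibers miss $E$, and I must show that the set of $z\in C(\R)$ whose fiber $W_z$ is hyperbolic with respect to $E$ \emph{and} has unramified projection from $E$ at real points is open in $C(\R)$.

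Next I would set up the projection. Let $f=(\pi_E\times\mathrm{id})|_W\colon W\to\pp^k\times C$ be the finite (after possibly shrinking; it is at least quasi-finite and proper onto its image, and since all fibers miss $E$ it is actually finite) surjective morphism, where $\pi_E\colon\pp^n\smallsetminus E\to\pp^k$ is the linear projection. The key point is to combine the two conditions into the single condition handled by Lemma \ref{lem:rfopen}. Concretely, fix $y\in\pp^k(\R)$ and consider the restricted finite morphism $f_y\colon f^{-1}(\{y\}\times C)\to \{y\}\times C\cong C$. By Lemma \ref{lem:curvefibercurve2} every irreducible component of $f^{-1}(\{y\}\times C)$ dominates $C$. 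Applying Lemma \ref{lem:rfopen} to $f_y$, the set
\[
S_y=\{z\in C(\R):\, f_y^{-1}(z)\subseteq f^{-1}(\{y\}\times C)(\R)\}\smallsetminus f_y\!\left(\Supp\Omega_{f^{-1}(\{y\}\times C)/C}\right)
\]
is open in $C(\R)$. Now $z\in C(\R)$ lies in $C(\R)\cap S$ iff for every $y\in\pp^k(\R)$ the fiber of $W_z$ over $y$ consists of real points and the projection $W_z\to\pp^k$ is unramified over $y$; one has to check that this unramifiedness at the real point $z$ matches, via the relative cotangent sheaf, the condition $z\notin f_y(\Supp\Omega_{f^{-1}(\{y\}\times C)/C})$ — here the base change compatibility $\Omega_{f^{-1}(\{y\}\times C)/C}\cong \Omega_{W/(\pp^k\times C)}|_{f^{-1}(\{y\}\times C)}$ does the job, so $z\in C(\R)\cap S$ iff $z\in\bigcap_{y\in\pp^k(\R)}S_y$.

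The remaining issue is that an arbitrary intersection of open sets need not be open, so I cannot simply conclude $C(\R)\cap S=\bigcap_{y}S_y$ is open. This is the main obstacle, and it is resolved exactly as in Lemma \ref{lem:defalongcurve}: the whole argument should be run ``in families over $\pp^k$'' rather than fiberwise. That is, instead of fixing $y$ I apply Lemma \ref{lem:rfopen} directly to the finite morphism $f\colon W\to\pp^k\times C$ together with the second projection $\mathrm{pr}_2\colon\pp^k\times C\to C$: by Lemma \ref{lem:rfopen} the locus $T\subseteq(\pp^k\times C)(\R)$ of points with totally real, unramified fiber under $f$ is open, hence its complement $Z$ is closed; then $C(\R)\cap S$ is the set of $z\in C(\R)$ with $\{z\}$-fiber of $\mathrm{pr}_2$ (a copy of $\pp^k(\R)$, which is compact) disjoint from $Z$. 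Since $\mathrm{pr}_2$ is proper and $Z$ is closed, $\mathrm{pr}_2(Z)$ is closed in $C(\R)$, and $C(\R)\cap S=C(\R)\smallsetminus\mathrm{pr}_2(Z)$ is open. Properness of $\mathrm{pr}_2$ (and of its real-point map $\pp^k(\R)\times C(\R)\to C(\R)$) is what makes the ``for all $y$'' into a finite-type, topologically closed operation, sidestepping the infinite-intersection problem. Assembling these steps, together with Lemma \ref{lem:curvesel2}(ii), gives the theorem.
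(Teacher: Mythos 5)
Your proof is correct and its decisive step --- applying Lemma \ref{lem:rfopen} to the finite morphism $f\colon W\to\pp^k\times(\textnormal{base})$ and then using compactness of $\pp^k(\R)$ to turn the openness of the totally-real-unramified locus into openness on the base --- is exactly the paper's argument. The only difference is that the paper applies this directly to the universal family over all of $H'$, so your initial reduction to curves, the normalization step, and the fiberwise discussion of the sets $S_y$ are an unnecessary (though harmless) detour.
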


\begin{proof}
 Let $W \subset \pp^n \times H'$ be a closed subscheme, flat over $H'$, all of whose fibers are closed subschemes of $\pp^n$ with Hilbert polynomial $P$ that do not intersect $E$. Let $\pi: \pp^n\smallsetminus E \to \pp^k$ be the linear projection from center $E$. The induced map $f=(\pi \times \textnormal{id})|_{W}: W \to \pp^k \times H'$ is quasi-finite and proper. Indeed, we have that $f$ composed with the projection $\pp^k\times H'\to H'$ is projective and then we can apply \cite[Cor. II-4.8(e)]{Hart77}. Thus it is finite by \cite[Thm. 8.11.1]{EGAIV3}. Therefore \[U=\{y \in \pp^k(\R) \times H'(\R):\, 
 f^{-1}(y)\subset W(\R)\}\smallsetminus f(\Supp \Omega_{W/\pp^k\times H'})\]is an open subset of $\pp^k(\R) \times H'(\R)$ by Lemma \ref{lem:rfopen}. A point $x \in H'(\R)$ is in $S$ if and only if the set $\pp^k(\R)\times\{x\}$ is fully contained in $U$. Thus, since $U$ is open and $\pp^k(\R)$ is compact, $S$ is open.
\end{proof}

\begin{Kor}\label{cor:smoothint}
  The set of equidimensional smooth hyperbolic subschemes is in the interior of the set of all hyperbolic subschemes.
\end{Kor}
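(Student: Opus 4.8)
The plan is to obtain Corollary~\ref{cor:smoothint} as an immediate consequence of Theorem~\ref{thm:strictly_hyperbolic_open}, using one external input. Recall from the introduction that, by \cite[Thm.\ 2.19]{us}, for every equidimensional smooth subvariety of $\pp^n$ hyperbolic with respect to $E$ the linear projection from $E$ is unramified at real points. Hence, writing $S\subseteq H'(\R)$ for the set appearing in Theorem~\ref{thm:strictly_hyperbolic_open}, every equidimensional smooth hyperbolic subscheme $X$ corresponds to a point $x$ of $S$.

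Next I would note that $S\subseteq\hyps$, since by construction each member of $S$ is in particular a subscheme hyperbolic with respect to $E$. By Theorem~\ref{thm:strictly_hyperbolic_open}, $S$ is open in the classical topology; because $H'$ is an open subscheme of $H$, the set $H'(\R)$ is open in $H(\R)$, so $S$ is open in $H(\R)$ as well, and it makes no difference whether the interior of $\hyps$ is formed inside $H'(\R)$ or inside $H(\R)$. Therefore $S$ is an open neighbourhood of $x$ contained in $\hyps$, whence $x$ lies in the interior of $\hyps$. Since $X$ was arbitrary, every equidimensional smooth hyperbolic subscheme lies in the interior of the set of all hyperbolic subschemes, which is the assertion.

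I expect no serious obstacle here; the corollary is essentially a restatement of Theorem~\ref{thm:strictly_hyperbolic_open} once the unramifiedness is known. The only point that needs a careful check is that the hypotheses of \cite[Thm.\ 2.19]{us} really are met --- equidimensionality and smoothness of $X$ and disjointness from $E$, the last being automatic since $X$ is hyperbolic with respect to $E$ --- and that the conclusion ``unramified at real points'' there coincides verbatim with the defining condition of $S$ in Theorem~\ref{thm:strictly_hyperbolic_open}. One could also remark that together with Theorem~\ref{thm:hyperbolic_locus_closed} this shows that such an $X$ lies neither in the boundary of $\hyps$ nor is it isolated, but only the openness statement is needed for the corollary itself.
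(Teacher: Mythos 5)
Your proposal is correct and is essentially the paper's own proof: the authors likewise deduce the corollary directly from Theorem \ref{thm:strictly_hyperbolic_open} combined with the result of \cite{us} that equidimensional smooth hyperbolic subvarieties have real-fibered, hence unramified at real points, projection from $E$. The only difference is cosmetic (the paper cites it as Thm.\ 2.1.9 of \cite{us}, the introduction as Thm.\ 2.19), so nothing further is needed.
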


\begin{proof}
 This follows from Theorem \ref{thm:strictly_hyperbolic_open} and \cite[Thm. 2.19]{us}.
\end{proof}

\begin{Bem}
 Let us stress another difference between the situation here and the one in the previous section. In the previous section when reducing to the case of curves smoothness was important. Since the curve selection lemma does not necessarily give us a smooth curve we had to use the normalization. This was not a problem because the normalization map is closed. But since it is not open this argument would not work in this section. However, we are lucky and we get along without smoothness of the target in this section.
 
 Let us further remark that if both $X$ and $Y$ are smooth of the same dimension in the crucial Lemma \ref{lem:rfopen}, then one can easily prove the statement using only the implicit function theorem:
 On the preimage of an open neighborhood of any point $y\in S$ the map $f$ is a covering map since it is unramified. The same is true if we restrict $f$ to the real points of $X$. Since the fiber over $y$ consists only of real points and the number of sheets is the same, for every point in a neighborhood of $y$ the fiber consists of real points only. Thus $S$ contains an open neighborhood of $y$. However, since the Hilbert scheme is not necessarily smooth, even at points that correspond to smooth varieties, this statement is not enough for our purposes.
\end{Bem}

The following two examples show that the set of smooth real subschemes of $\pp^n$ with the same Hilbert polynomial and hyperbolic with respect to $E$ can be disconnected and that one cannot always build even a topological isotopy between the real points of two hyperbolic curves in $\pp^3$.

\begin{example} \label{ex:different_writhe}
Consider the twisted cubic $(t^3: s t^2 : s^2 t: s^3)$ in $\pp^3$, this curve is hyperbolic with respect to the line spanned by $(4:0:1:0)$ and $(0:1:0:1)$ (see \cite[Ex.\ 4.15]{us} for details). Now consider another twisted cubic $(t^3 - s^2t: \frac{1}{2} s t^2 - s^2 t : \frac{1}{4} s t^2: 2 s t^2 -2 s^2 t - s^3)$, it is also hyperbolic with respect to the same line as the original one. However, by \cite{Bjo11} there exists no deformation passing through only smooth real curves that connects the two since the writhe of the first curve is $1$ and the writhe of the second is $-1$ (the second one is obtained from $(t^3: s t^2: s^2 t: - s^3)$ by applying a linear map with positive determinant).
\end{example}

\begin{example} \label{ex:shastri}
In \cite[\S3.1]{Sha92} Shastri constructed a representation of a long trefoil knot given by $(t^3 s^2 -3 t s^4 : t^4 s-4 t^2 s^3 : t^5 - 10 t s^4 : s^5)$. It turns out that this curve is hyperbolic with respect to the line spanned by $(0: 0: 1: -2)$ and $(1: -3: 21: -2)$. To see this note that the projection from this line gives rise to the map \[\pp^1\to\pp^1,\,(s:t)\mapsto(t^4s+3t^3s^2-4t^2s^3-9ts^4:2t^5-40t^3s^2+100ts^4+s^5).\] The B\'ezout matrix of these two polynomials is given by
\[
\begin{pmatrix}
9 & 4 & -3 & -1 & 0 \\
4 & 397 & 59 & -100 & -18 \\
-3 & 59 & 60 & -18 & -8 \\
-1 & -100 & -18 & 32 & 6 \\
0 & -18 & -8 & 6 & 2
\end{pmatrix}.
\]
It is easy to check that this matrix is positive definite. This means that the two polynomials have interlacing zeros and thus the map is real fibered, cf. \cite[Ex. 2.5]{us}. The same example from \cite{us} allows us to construct a degree $5$ unknot, for example if we consider the parametrization $[s^5: t s^4: (t+\frac{2}{3}s)(t+\frac{1}{3}s)(t-\frac{1}{3}s)(t-\frac{2}{3}s)(t-\frac{4}{3}s): (t+s)(t+\frac{1}{2}s)t(t-\frac{1}{2}s)(t-1)]$ we obtain a curve that is an unknot since its encomplexed writhe is $0$ (see \cite{Bjo11} for the classification of rational knots of low degree). The curve we obtain is hyperbolic with respect to the line $x_2 = x_3 = 0$ and of course we can rotate it to obtain an unknot hyperbolic with respect to the same line as the Shastri trefoil knot.

\end{example}

\section{Connectedness of the Hyperbolic Locus} \label{sec:connected}

In \cite{Har66} Hartshorne showed that the Hilbert scheme is connected. More modern treatment of this result as appears in \cite{Rev95} and \cite{Par96} uses the technique of Gr\"{o}bner degeneration. We, however, will use the original ideas of Hartshorne and thus some of his terminology. In particular, we will refer to a union of linear subspaces as ``fans'', we believe that no confusion with toric varieties will arise. 

In this section, we will check that the connecting path obtained by Hartshorne's construction passes only through schemes hyperbolic with respect to a fixed linear space. Let us denote by $E \subset \pp^n$ the space spanned by the last $n-k$ vectors of the standard basis, of dimension $n-k-1$ and let $E^{\perp}$ be the orthogonal complement of $E$ with respect to the standard inner product.

\begin{Lem} \label{lem:deform_to_perp}
Let $X$ be a real scheme hyperbolic with respect to $E$, then there exists a flat deformation of $X$ into a scheme with support $E^{\perp}$ over $\mathbb{A}^1$, such that the fiber over every $\R$-point is hyperbolic with respect to $E$.
\end{Lem}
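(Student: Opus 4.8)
The plan is to exhibit an explicit one-parameter degeneration realized by rescaling the coordinates on $E$, and then check that every fiber stays hyperbolic with respect to $E$ by reducing to the already-established closedness of the hyperbolic locus along a curve (Lemma~\ref{lem:defalongcurve}).

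First I would set up the degeneration. Write the homogeneous coordinates of $\pp^n$ as $x_0,\dots,x_k$ (coordinates on $E^\perp$) and $x_{k+1},\dots,x_n$ (coordinates vanishing on $E^\perp$, i.e. spanning $E$). For $\lambda \in \A^1$ let $\psi_\lambda \in \GL_{n+1}$ be the diagonal automorphism of $\pp^n$ that is the identity on $x_0,\dots,x_k$ and multiplies $x_{k+1},\dots,x_n$ by $\lambda$. For $\lambda \neq 0$ this is an automorphism of $\pp^n$ fixing both $E$ and $E^\perp$; in particular $\psi_\lambda(X)$ does not meet $E$ and is again hyperbolic with respect to $E$, since $\psi_\lambda$ commutes with the projection $\pi$ from $E$ (it descends to the identity on $\pp^k$) and carries real points to real points. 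Taking the flat limit as $\lambda \to 0$ gives a closed subscheme $W \subseteq \pp^n \times \A^1$, flat over $\A^1$, whose fiber over $\lambda \neq 0$ is $\psi_\lambda(X)$ and whose fiber over $0$ is a subscheme supported on the fixed locus of the $\lambda \to 0$ limit, which is exactly $E^\perp$. Concretely $W$ is the closure in $\pp^n \times \A^1$ of the image of $X \times (\A^1 \smallsetminus \{0\})$ under $(p,\lambda)\mapsto(\psi_\lambda(p),\lambda)$, so $W \to \A^1$ is flat because $W$ is integral (assuming $X$ integral; in general treat each component, or just note $W$ is the closure of a subscheme flat over the dense open $\A^1\smallsetminus\{0\}$ and has no embedded components over $0$ by construction as a closure, hence is flat by \cite[Prop.\ III-9.8]{Hart77}). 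One must check the generic Hilbert polynomial is preserved; this is automatic since for $\lambda\ne 0$ the fibers are projectively equivalent to $X$.

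Next I would verify the two properties claimed. That the fiber over $0$ has support $E^\perp$: the ideal of $W$ is generated by $\lambda$-weighted versions of the generators of the ideal $I(X)$, and setting $\lambda = 0$ kills every monomial involving some $x_j$ with $j > k$; what survives generates an ideal whose radical is $(x_{k+1},\dots,x_n) = I(E^\perp)$, because $X$ meets the affine chart $E^\perp$... more carefully, since $\pi|_X$ is finite surjective onto $\pp^k$, no generator of $I(X)$ lies in $\R[x_{k+1},\dots,x_n]$ alone, and the lowest-$\lambda$-weight terms are supported on $E^\perp$. For the hyperbolicity of each fiber over an $\R$-point: every fiber avoids $E$ (for $\lambda \neq 0$ by the automorphism argument; for $\lambda = 0$ because the support is $E^\perp$, which is disjoint from $E$). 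So $W \to \A^1$ is a flat family all of whose fibers miss $E$, and Lemma~\ref{lem:defalongcurve} (applied with $C = \A^1$) tells us the set of $\lambda \in \A^1(\R)$ with hyperbolic fiber is closed in $\A^1(\R)$; since it contains $\A^1(\R)\smallsetminus\{0\}$, which is dense, it is all of $\A^1(\R)$. In particular the fiber over $0$, supported on $E^\perp$, is hyperbolic with respect to $E$.

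The main obstacle I anticipate is the bookkeeping around flatness and the identification of the central fiber when $X$ is an arbitrary subscheme rather than an integral variety: one needs the central fiber to genuinely have support $E^\perp$ (and not, say, acquire extra components or fail to be the ``right'' flat limit), and one needs the flat limit to have the prescribed Hilbert polynomial. Both follow from the standard theory of Gröbner/weight degenerations — taking the flat closure of $X\times(\A^1\smallsetminus\{0\})$ automatically yields a flat family with the same Hilbert polynomial, and the weight filtration by the torus action $\psi_\lambda$ forces the central fiber to be supported on the torus-fixed locus $E^\perp$. The hyperbolicity of the fibers, which would otherwise be the delicate point, is handed to us essentially for free by Lemma~\ref{lem:defalongcurve} together with the density of $\A^1(\R)\smallsetminus\{0\}$, so no new real-algebraic argument is needed here.
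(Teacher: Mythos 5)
Your proposal is correct and takes essentially the same approach as the paper: the paper's proof uses exactly this diagonal rescaling $\sigma_a$ of the coordinates $x_{k+1},\ldots,x_n$ and the unique flat extension over $\A^1$ of the resulting family on $\A^1\smallsetminus\{0\}$. The only cosmetic difference is that you deduce hyperbolicity of the central fiber from Lemma \ref{lem:defalongcurve} plus density of $\A^1(\R)\smallsetminus\{0\}$, whereas the paper simply notes that the fiber over $0$ is supported on $E^{\perp}$ (and any such scheme is automatically hyperbolic with respect to $E$); both are fine.
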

\begin{proof}
For each $a\in\A^1\smallsetminus\{0\}$ consider the automorphism $\sigma_a$ of $\pp^n$ defined by $(x_0,\ldots,x_n)\mapsto(x_0,\ldots,x_k,ax_{k+1},\ldots,ax_{n})$. Then the $X_a=\sigma_a(X)$ form a flat family parametrized by $\A^1\smallsetminus\{0\}$ which extends uniquely to a flat family over $\A^1$. The fiber $X_0$ over $0$ agrees set theoretically with $E^\perp$ and for every $a\in\R$ the fiber $X_a$ is hyperbolic with respect to $E$.
%
\end{proof}

Recall that in \cite{Har66} Hartshorne defines for a coherent sheaf $\cF$ on a noetherian scheme $X$ the functors:
\[
R^i(\cF)(U) = \left\{ s \in \Gamma(U,\cF) \mid \codim \Supp(s) \geq i \right\}, \quad F^i(\cF) = \cF/R^i(\cF).
\]
Now let $\cF$ be a coherent sheaf on $\pp^n$. We denote by $n_i(\cF)$ the coefficient of $z^i$ in the Hilbert polynomial of $R^{n-i}(\cF)$ multiplied by $i!$. We write:
\[
n_*(\cF) = (n_k(\cF),\ldots,n_0(\cF)).
\]

\begin{Lem} \label{lem:n_*_increases}
Let $X$ be a real scheme hyperbolic with respect to $E$ and $Y$ be the scheme supported on $E^{\perp}$ obtained in the previous lemma, then $n_*(Y) \geq n_*(X)$ (with respect to pointwise ordering).
\end{Lem}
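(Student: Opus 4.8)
**Proof proposal for Lemma \ref{lem:n_*_increases}.**

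The plan is to exploit the fact that the family $X_a = \sigma_a(X)$ of Lemma \ref{lem:deform_to_perp} is obtained from $X$ by applying projective automorphisms for $a \neq 0$, so that $X_a \cong X$ as abstract schemes (and with the same embedding data, up to the coordinate rescaling), and only the special fiber $Y = X_0$ differs. Since $n_*$ is built from Hilbert polynomials of the sheaves $R^{n-i}(\cO_{X_a})$, and these are computed from the structure sheaf of the fiber, the invariants $n_*(X_a)$ are constant for $a \neq 0$ and equal to $n_*(X)$. Thus the statement reduces to a semicontinuity comparison between the general fiber and the special fiber of a flat family over $\A^1$: one must show that passing to the limit $a \to 0$ can only increase each coordinate $n_i$.

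First I would set up the flat family $\cW \subseteq \pp^n \times \A^1$ with $\cW \to \A^1$ flat, general fiber $X$, special fiber $Y$ (supported on $E^\perp$), exactly as in Lemma \ref{lem:deform_to_perp}. The key observation is that for the structure sheaf $\cO_{\cW}$, the associated graded pieces of the filtration by the $R^i$ behave well in flat families: the sheaf $R^i(\cO_{\cW})$ is the subsheaf of sections supported in codimension $\geq i$, and one wants to compare its restriction to the general fiber with $R^i(\cO_Y)$ on the special fiber. The Hilbert polynomial of $\cO_{X_a}$ is constant in $a$ (flatness), so $\sum_i (\text{contribution of } R^{n-i})$ is constant; the content of the lemma is that the ``weight'' redistributes toward higher-dimensional pieces. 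More precisely, writing $Z^i_a$ for the subscheme of the fiber $X_a$ cut out by the sections of codimension $\geq n-i$, one shows that the length/multiplicity data of the $i$-dimensional part of $Y$ dominates that of $X$ by a standard upper-semicontinuity argument: the $i$-dimensional components of the limit $Y$ must account for at least the $i$-dimensional components of the nearby fibers, plus possibly extra (embedded or lower-dimensional components of $X$ that degenerate into higher-dimensional pieces of $Y$, or just extra multiplicity). I would make this precise by induction on $i$ from $k$ downward: $n_k$ is the degree of the top-dimensional part, which is constant (flatness preserves the leading coefficient), giving equality in the top slot; then for lower $i$ one peels off the top-dimensional part and applies semicontinuity of Hilbert polynomials of the quotients $F^{n-i}(\cO_{X_a})$ as $a \to 0$, noting that on the special fiber these quotients can only be ``larger''.

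The main obstacle I anticipate is making rigorous the claim that the functors $R^i$ and $F^i$ interact correctly with specialization in the flat family — i.e., that $R^i(\cO_{\cW})|_{X_0}$ is contained in (or comparable to) $R^i(\cO_Y)$, since $R^i$ is defined by a support condition that is not obviously compatible with base change. I would handle this by working with Hilbert polynomials directly rather than the sheaves: the coefficient $n_i(\cF) = i! \cdot [\text{coeff of } z^i \text{ in Hilbert poly of } R^{n-i}(\cF)]$ equals, up to the combinatorics, the sum of degrees of the $i$-dimensional irreducible components of $\Supp \cF$ counted with their scheme-theoretic multiplicities. This is the description Hartshorne uses in \cite{Har66}, and for it the desired inequality is exactly the statement that under a flat degeneration the $i$-dimensional cycle-theoretic multiplicity of the special fiber is at least that of the general fiber — which follows from conservation of number together with the fact that components can only merge or gain multiplicity, never lose it, in the limit. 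Since $Y$ is supported on the single linear space $E^\perp$, all of its components except the top one are ``new'' relative to $X$, which forces the inequality to be strict in general and weak ($\geq$) always; the top coefficient is preserved by flatness. I would conclude by assembling these slot-by-slot inequalities into $n_*(Y) \geq n_*(X)$ in the pointwise order.
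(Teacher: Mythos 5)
Your reduction is the right one and is essentially what the paper does implicitly: the fibers $X_a=\sigma_a(X)$ for $a\neq 0$ (including the generic fiber, after the harmless base change to $\R(t)$) are obtained from $X$ by linear automorphisms of $\pp^n$, which preserve Hilbert polynomials and codimensions of supports, hence $n_*$; so everything comes down to comparing the special fiber of the flat family over $\A^1$ with the general one. The paper disposes of that comparison in one line by citing Hartshorne's semicontinuity theorem \cite[Thm.\ 2.10]{Har66}, which states precisely that $n_*$ can only increase under specialization in a flat family.

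The gap in your proposal is that your substitute for that citation is not a proof. First, the description of $n_i(\cF)$ as ``the sum of degrees of the $i$-dimensional irreducible components of $\Supp\cF$ with multiplicities'' is incorrect: $R^{n-i}(\cF)$ consists of sections whose own support has dimension $\leq i$, so for $i<\dim\Supp\cF$ the invariant $n_i$ records embedded and lower-dimensional \emph{associated} components, not components of $\Supp\cF$ (e.g.\ a line with an embedded point has $n_0>0$ although its support has no $0$-dimensional component). Second, and more seriously, the assertion that ``components can only merge or gain multiplicity, never lose it'' is exactly the content of Hartshorne's theorem and does not follow from conservation of number: the total Hilbert polynomial is constant in \emph{both} directions of a putative degeneration (two skew lines versus two incident lines with an embedded point both have Hilbert polynomial $2z+2$), so constancy of the Hilbert polynomial cannot by itself rule out a family whose general fiber has an embedded point specializing to a pure scheme. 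The genuine argument requires comparing $R^i(\cO_{\cW})\otimes\kappa(0)$ with $R^i(\cO_Y)$ over a discrete valuation ring — precisely the compatibility-with-base-change issue you flag as the ``main obstacle'' and then sidestep rather than resolve. Either carry out that comparison or, as the paper does, cite \cite[Thm.\ 2.10]{Har66} directly; only the top slot $n_k$ is handled by flatness alone.
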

\begin{proof}
We apply \cite[Thm.\ 2.10]{Har66}, where $Y = \A^1$ and $F = \cO_X$.
\end{proof}


Let write $B_m \subset \GL_m(\R)$ for the subgroup of upper triangular matrices. The following lemma is a hyperbolic version of \cite[Cor.\ 5.3]{Har66}.

\begin{Lem} \label{lem:borel_invariant}
Let $X$ be a real closed subscheme of $\pp^n$ supported on $E^{\perp}$, then there exists a sequence of linear specializations $X = X_0, X_1, \ldots, X_r$ in $\pp^n$, such that each $X_j$ is supported on $E^{\perp}$ and $X_r$ is invariant under $B_{n+1}$. 
\end{Lem}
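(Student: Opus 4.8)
The plan is to mimic Hartshorne's argument from \cite[\S5]{Har66}, where one deforms a subscheme of $\pp^n$ supported on a linear space to one invariant under the full Borel group by a sequence of one-parameter degenerations, while tracking the extra hyperbolicity condition along the way. The key point is that the one-parameter families Hartshorne uses are orbits under one-parameter subgroups $\lambda(t)\colon\G_m\to\GL_{n+1}(\R)$ that are upper triangular (in a basis adapted to $E^\perp$), and such subgroups, together with their limits, preserve hyperbolicity with respect to $E$ provided they fix $E$ setwise. So first I would fix coordinates: let $E^\perp$ be spanned by $e_0,\dots,e_k$ and $E$ by $e_{k+1},\dots,e_n$. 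I want every linear specialization used to be by a one-parameter subgroup lying in the parabolic subgroup $Q\subseteq\GL_{n+1}$ stabilizing the flag-piece $E$ (equivalently, block-upper-triangular matrices with respect to the decomposition $\R^{k+1}\oplus\R^{n-k}$, acting by $\mathrm{diag}$-type $\G_m$-actions); such a subgroup stabilizes $E$, hence the projection from $E$ commutes with it up to an automorphism of $\pp^k$, so hyperbolicity is preserved in the limit exactly as in Lemma \ref{lem:deform_to_perp}.

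Second, I would carry out the actual specialization. Following \cite[Prop.\ 5.2 and Cor.\ 5.3]{Har66}: since $X$ is supported on $E^\perp\cong\pp^k$, its defining ideal $I\subseteq\R[x_0,\dots,x_n]$ contains a power of $(x_{k+1},\dots,x_n)$. Hartshorne's procedure replaces $I$ step by step by its initial ideal with respect to a suitable term order, realized geometrically as the flat limit $X_{j+1}=\lim_{t\to 0}\lambda_j(t)\cdot X_j$ of a $\G_m$-orbit; after finitely many steps one reaches a monomial (Borel-fixed) ideal. The modification needed here is to choose all the $\lambda_j$ inside the parabolic $Q$ above. This is possible because $B_{n+1}\cap Q = B_{n+1}$ — the full upper-triangular group already stabilizes $E$ when $E$ is spanned by the \emph{last} coordinate vectors — so Hartshorne's weight vectors (which can be taken with decreasing entries) automatically give one-parameter subgroups in $Q$, and each intermediate $X_j$ stays supported on $E^\perp$ because $Q$ stabilizes $E^\perp$... wait, $Q$ as I described it stabilizes $E$ but only the opposite parabolic stabilizes $E^\perp$; so I must instead take $Q$ to be the parabolic stabilizing $E^\perp$, i.e. block-\emph{lower}-triangular, but then intersect with $B_{n+1}$ — the cleaner route is to note that $B_{n+1}$ itself stabilizes the coordinate flag and in particular stabilizes both the span of the first $k+1$ vectors ($E^\perp$) and… no: $B_{n+1}$ stabilizes the span of the \emph{last} $n-k$ vectors, which is $E$, not $E^\perp$. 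So I should run Hartshorne's argument with the opposite Borel (lower triangular) if I want $E^\perp$ preserved, or equivalently reorder coordinates so $E^\perp$ is the span of the \emph{last} vectors; I will state it so that the Borel group in question is the one stabilizing the coordinate flag that has $E^\perp$ as a member, and since hyperbolicity only requires $E$ to be stabilized, and $E$ is the complementary coordinate subspace which is also flag-compatible, both conditions hold simultaneously. The upshot: each $\lambda_j$ can be chosen in $B_{n+1}$ (in appropriate coordinates), every $X_j$ remains supported on $E^\perp$, every fiber over an $\R$-point of the connecting $\A^1$ is hyperbolic with respect to $E$ by the argument of Lemma \ref{lem:deform_to_perp}, and $X_r$ is $B_{n+1}$-invariant.

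Concretely the steps are: (1) set up coordinates so that $E$ and $E^\perp$ are both unions of coordinate subspaces and $B_{n+1}$ stabilizes the relevant flag; (2) recall from \cite[Cor.\ 5.3]{Har66} that $X$ admits a chain of linear specializations to a $B_{n+1}$-fixed subscheme, via $\G_m$-orbit limits for dominant weights; (3) observe that these $\G_m$-subgroups lie in $B_{n+1}$ and hence fix $E$ setwise and fix $E^\perp$ setwise, so $X_j$ stays supported on $E^\perp$; (4) invoke the commutation of the projection $\pi$ from $E$ with these torus actions (each $\lambda(t)$ descends to an automorphism of $\pp^k$) to conclude, as in Lemma \ref{lem:deform_to_perp}, that every $\R$-fiber of each specialization family is hyperbolic with respect to $E$; (5) conclude. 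The main obstacle — and the only thing that needs genuine care — is step (1)/(3): making sure that the particular weight vectors produced by Hartshorne's combinatorial procedure can be taken to lie in the coordinate subtorus compatible with \emph{both} $E$ and $E^\perp$, so that "$X_j$ supported on $E^\perp$" and "$B_{n+1}$ stabilizes $E$" are not in conflict. Since $E$ and $E^\perp$ are complementary coordinate subspaces, any diagonal (hence any $B_{n+1}$-) one-parameter subgroup stabilizes both, so there is in fact no conflict; the remaining work is bookkeeping to confirm that Hartshorne's specializations are realized by diagonal one-parameter subgroups, which they are. This, incidentally, is where hyperbolicity could in principle fail for a stray non-triangular specialization, but none occur.
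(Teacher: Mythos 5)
There is a genuine gap in the mechanism you propose for preserving hyperbolicity along the specializations. You reduce everything to the claim that the one-parameter subgroups realizing Hartshorne's specializations can be taken diagonal, hence stabilize $E$ setwise, hence commute (up to an automorphism of $\pp^k$) with the projection from $E$. But the construction of \cite[Prop.\ 5.2]{Har66} runs through a composition series of $B_{n+1}$ whose successive quotients are copies of $\mathbb{G}_a$ as well as $\mathbb{G}_m$; the $\mathbb{G}_a$-steps are limits along orbits of unipotent subgroups (sending $e_j$ to $e_j+te_i$ with $i<j$), which are not diagonal and do not stabilize $E$: for $i\leq k<j$ they move $e_j$ out of $E=\mathrm{span}(e_{k+1},\ldots,e_n)$. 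Restricting to the diagonal torus is not an option either, since a torus-fixed ideal is merely monomial, not Borel-fixed, so you would not reach a $B_{n+1}$-invariant scheme. You also have the flag backwards at the decisive moment: the upper triangular group stabilizes the spans of \emph{initial} segments of the standard basis, so it stabilizes $E^{\perp}=\mathrm{span}(e_0,\ldots,e_k)$ and not $E$ --- which is exactly what keeps the supports inside $E^{\perp}$, but it kills your argument for hyperbolicity.

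The observation that repairs this, and that the paper's proof rests on, is that hyperbolicity comes for free once the support is controlled: any closed subscheme whose support is contained in $E^{\perp}$ is automatically hyperbolic with respect to $E$, because it is disjoint from $E$ and every $(n-k)$-plane containing $E$ meets $E^{\perp}$ in exactly one point, which is real. Since all of $B_{n+1}$ stabilizes $E^{\perp}$ setwise, every fiber in each of Hartshorne's specialization families --- including the unipotent steps and the flat limits --- remains supported on $E^{\perp}$ and is therefore hyperbolic with respect to $E$. No commutation with the projection, and no restriction on which one-parameter subgroups of $B_{n+1}$ are used, is needed.
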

\begin{proof}
We can take a composition series for $B_{n+1}$, such that the intermediate quotients are either the additive or multiplicative group of $\R$ and that at each step we have semi-direct products. Now we would like to apply \cite[Prop.\ 5.2]{Har66}. We only need to verify that the construction in \cite[Prop.\ 5.2]{Har66} preserves hyperbolicity with respect to $E$. To see this we note that $E^{\perp}$, which is the support of $X$, is invariant (as a set and not pointwise) under the action of $B_{n+1}$, so when we perform the construction of \cite[Prop.\ 5.2]{Har66} the support of all the fibers is $E^{\perp}$, hence the scheme we obtain is hyperbolic with respect to $E$. 
%
\end{proof}

A monomial ideal $I \subset \R[x_1,\ldots,x_n]$ is an ideal that is an ideal generated by monomials. Equivalently, if $f \in I$, then every monomial in $f$ is also in $I$. The monomials form an ordered semi-group with respect to the coordinate-wise ordering. If $I$ is a monomial ideal, it is in fact generated by all the monomials in $I$ that are minimal with respect to this ordering. Let $f \in \R[x_1,\ldots,x_n]$ and let $ 1 \leq i < j \leq n$. We define the polynomial $f_{ij}$ to be the polynomial obtained from $f$ by replacing $x_i$ with $x_j$. A monomial ideal $I \subset \R[x_1,\ldots,x_n]$ is called Borel-fixed if whenever $f \in I$ and $1 \leq i < j \leq n$ are indices, we have that $f_{ij} \in I$.

\begin{Kor}
Let $X_r$ be the scheme obtained from $X$ in the previous lemma. Then $X_r$ is supported on $E^{\perp}$, hyperbolic with respect to $E$, it is cut out by a Borel-fixed ideal and satisfies $n_*(X_r) \geq n_*(X)$.
\end{Kor}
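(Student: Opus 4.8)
The plan is to string together the statements that have just been established so that each attribute of $X_r$ is accounted for. First I would invoke Lemma~\ref{lem:deform_to_perp} applied to $X$: this produces a flat deformation over $\A^1$ whose special fiber, call it $Y$, is supported on $E^{\perp}$ and is hyperbolic with respect to $E$ (every $\R$-fiber, in particular the fiber over $0$, is hyperbolic). By Lemma~\ref{lem:n_*_increases} we moreover have $n_*(Y)\geq n_*(X)$. Then I would apply Lemma~\ref{lem:borel_invariant} to $Y$ to obtain the chain of linear specializations $Y=X_0,X_1,\dots,X_r$, each supported on $E^{\perp}$, with $X_r$ invariant under $B_{n+1}$; along the way I should note that the construction of \cite[Prop.~5.2]{Har66}, being itself a sequence of flat families, only increases $n_*$ at each step (this is again \cite[Thm.~2.10]{Har66} applied to the one-parameter families in the composition series), so that $n_*(X_r)\geq n_*(Y)\geq n_*(X)$. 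Since each $X_j$ has support $E^{\perp}$ and each fiber of each intermediate family does too, hyperbolicity with respect to $E$ is preserved throughout exactly as in the proof of Lemma~\ref{lem:borel_invariant}, so $X_r$ is hyperbolic with respect to $E$.

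It remains to deduce that a $B_{n+1}$-invariant closed subscheme supported on $E^{\perp}$ is monomial and balanced. Here I would work in the affine chart $x_0\neq 0$ with coordinates $x_1,\dots,x_n$, where $E^{\perp}$ is the origin and $B_{n+1}$ acts through its subgroup $B_n\subseteq\GL_n(\R)$ of upper triangular matrices together with scalings. The defining ideal $I$ of $X_r$ in $\R[x_1,\dots,x_n]$ is invariant under the diagonal torus $T\subseteq B_n$; a $T$-invariant ideal is spanned by the monomials it contains, hence is a monomial ideal. For balancedness, recall that $f_{ij}$ is obtained from $f$ by substituting $x_i\mapsto x_j$, and this substitution (for $i<j$) is realized by a degeneration of the elementary unipotent $e_{ij}(\lambda):x_i\mapsto x_i+\lambda x_j$ lying in $B_n$: if $f\in I$ then $e_{ij}(\lambda)\cdot f\in I$ for all $\lambda$, and extracting the appropriate coefficient in $\lambda$ (or taking a limit) shows $f_{ij}\in I$. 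This gives that $I$ is balanced, which is precisely the statement that $X_r$ is balanced. I would spell out the (standard) fact that $T$-invariance forces monomiality and that the unipotent part of $B_n$ yields the $x_i\mapsto x_j$ substitutions, since this is the only part not already packaged in the cited lemmas.

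The one point requiring a little care — and the place I would expect a referee to look — is the bookkeeping that the chain of specializations in Lemma~\ref{lem:borel_invariant} does not decrease $n_*$: Lemma~\ref{lem:n_*_increases} as stated only covers the single deformation of Lemma~\ref{lem:deform_to_perp}, so I would add a sentence pointing out that \cite[Thm.~2.10]{Har66}, applied to each one-parameter family produced by \cite[Prop.~5.2]{Har66} in the composition series, gives $n_*(X_{j+1})\geq n_*(X_j)$ for every $j$, and then chain these inequalities. Everything else is simply collecting the conclusions of Lemma~\ref{lem:deform_to_perp}, Lemma~\ref{lem:n_*_increases}, and Lemma~\ref{lem:borel_invariant}, and translating $B_{n+1}$-invariance into the monomial and balanced conditions. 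No serious obstacle is anticipated; the statement is essentially a corollary assembling the preceding lemmas, which is consistent with its being labelled a Corollary.
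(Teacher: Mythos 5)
Your argument is correct and follows the paper's (very terse) proof: chain \cite[Thm.~2.10]{Har66} along the linear specializations of Lemma~\ref{lem:borel_invariant} for the $n_*$ inequality, use the $E^{\perp}$-support of every fiber for hyperbolicity, and deduce monomiality and balancedness from $B_{n+1}$-invariance (the paper simply cites \cite[Prop.~5.4]{Har66} for this last step, which you instead prove directly via torus- and unipotent-invariance). The only small slip is the initial appeal to Lemma~\ref{lem:deform_to_perp}: the $X$ in the corollary is already the $E^{\perp}$-supported scheme of Lemma~\ref{lem:borel_invariant}, so that step is superfluous (it belongs to the proof of Theorem~\ref{thm:connected}), though it does no harm.
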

\begin{proof}
This follows immediately from the above lemma, \cite[Thm.\ 2.10]{Har66} and \cite[Prop.\ 5.4]{Har66}.
\end{proof}

Consider $ \R[t_{ij}]$, where $i$ and $j$ are positive integers. A canonical distraction of a monomial ideal $I \subset \R[x_1,\ldots,x_n]$ is the ideal generated by the expressions:
\[
\prod_{j=1}^{s_1} (x_1 - t_{1j} x_0) \cdots \prod_{j=1}^{s_n} (x_n - t_{nj} x_0),
\]
where $x_1^{s_1} \cdots x_n^{s_n}$ is a minimal (with respect to divisibility) monomial in $I$. 

A \textit{fan} is a subvariety of $\pp^n$ whose ideal is the intersection of prime ideals of the form $(x_{j_1} - a_1 x_0, x_{j_2} - a_2 x_0, \ldots, x_{j_r} - a_r x_0)$ where $1 \leq j_1 < j_2 < \cdots < j_r \leq n$. A \textit{tight fan} is a subvariety of $\pp^n$ whose ideal is the intersection of prime ideals of the form $(x_{j_1} , x_{j_2} , \ldots,x_{j_r-1}, x_{j_r} - a_r x_0)$ where $1 \leq j_1 < j_2 < \cdots < j_r \leq n$.

\begin{Lem} \label{lem:distractions}
Let $X$ be a real scheme supported on $E^{\perp}$ cut out by a Borel-fixed ideal. Applying distractions we obtain a fan $Y$ hyperbolic with respect to $E$ and $n_*(X) \leq n_*(Y)$.
\end{Lem}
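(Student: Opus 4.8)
The plan is to run the canonical-distraction construction from \cite[\S 3]{Har66} while tracking hyperbolicity at every stage. First I would fix the balanced monomial ideal $I$ cutting out $X$ and form the distraction ideal $J$ generated by the products $\prod_{j=1}^{s_i}(x_i - t_{ij}x_0)$ over the minimal generators $x_1^{s_1}\cdots x_n^{s_n}$ of $I$, with all the $t_{ij}$ treated first as parameters ranging over $\A^N$. For generic (indeed, for all real) values of the $t_{ij}$ this defines a flat family over $\A^N$ whose special fiber at $t_{ij}=0$ is $X$ itself and whose general fiber is a fan $Y$ of the asserted form; flatness and the fact that $n_*$ can only increase along such a specialization follow from \cite[Thm.\ 2.10]{Har66} exactly as in the previous corollaries, so $n_*(X)\le n_*(Y)$ is immediate once flatness is in hand. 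The combinatorial input — that a balanced monomial ideal has a canonical distraction whose radical is an intersection of linear primes $(x_{j_1}-a_1x_0,\ldots,x_{j_r}-a_rx_0)$, i.e.\ defines a fan — is \cite[Prop.\ 5.4]{Har66} and I would simply cite it.

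The real content is the hyperbolicity statement, and here the key observation is the one already used in Lemma \ref{lem:borel_invariant}: the support of $X$ is $E^\perp$, the linear space $x_{k+1}=\cdots=x_n=0$. Since $I$ is supported on $E^\perp$, every minimal monomial generator of $I$ involves only the variables $x_{k+1},\ldots,x_n$ (the variables "transverse" to $E^\perp$ in the relevant sense), so in each distracting product $\prod_{j}(x_i-t_{ij}x_0)$ only indices $i>k$ occur. Consequently every fiber of the distraction family, and in particular the fan $Y$, is again supported on (a translate inside the linear system spanned by $E^\perp$ and the coordinate point, hence still) a union of linear spaces meeting $E$ trivially; more precisely each component $(x_{j_1}-a_1x_0,\ldots,x_{j_r}-a_rx_0)$ has all $j_\ell>k$, so it is a $k$-plane disjoint from $E$ and the projection $\pi$ from $E$ restricts to an isomorphism on it. A finite union of such $k$-planes, with the projection from $E$ restricting to a finite real-fibered map, is hyperbolic with respect to $E$ essentially by definition — each fiber of $\pi|_Y$ is a finite set of real points. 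I would phrase this as: $Y$ is a fan all of whose components are $k$-planes complementary to $E$, and any such fan is hyperbolic with respect to $E$, which is either immediate or a one-line reference to \cite{us} or \cite{Sha14}.

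The one point that needs genuine care, and which I expect to be the main obstacle, is showing that hyperbolicity of $X$ survives the \emph{passage through the whole family}, not merely that the endpoints $X$ and $Y$ are individually hyperbolic. For the connectedness application one wants the specialization to be realized by a curve in $\hyps$, so one should check that for every real value of the parameters $t_{ij}$ the corresponding fiber is hyperbolic with respect to $E$. This again reduces to the support observation: for real $t_{ij}$ the ideal $J_t$ still involves only $x_0$ and $x_{k+1},\ldots,x_n$ among its generators beyond the "$E^\perp$-constant" directions, so set-theoretically every fiber is a union of $k$-planes, each of the form $(x_{j_1}-a_1x_0,\ldots)$ with $j_\ell>k$, hence disjoint from $E$ and mapping isomorphically under $\pi$; a (possibly non-reduced) subscheme whose underlying set is a finite union of such $k$-planes, with $\pi$ finite on it, has all $\pi$-fibers concentrated at real points, and the trace-form criterion invoked before Lemma \ref{lem:rfclosed1} — or simply the definition of hyperbolicity for subschemes — then gives hyperbolicity of every fiber. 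I would close the argument by noting that $n_*(X)\le n_*(Y)$ follows from \cite[Thm.\ 2.10]{Har66} applied to this flat family over $\A^1$ (restricting the $N$-parameter family to a general line through the origin), exactly as in Lemma \ref{lem:n_*_increases}.
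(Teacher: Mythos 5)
Your proposal follows essentially the same route as the paper: apply the canonical distraction, observe that because $I$ is generated by monomials in $x_{k+1},\ldots,x_n$ only those variables get distracted, so every component of the resulting fan is a linear space cut out by forms $x_i-cx_0$ with $i>k$ and hence disjoint from $E=\cV_+(x_0,\ldots,x_k)$, which gives hyperbolicity; the inequality $n_*(X)\leq n_*(Y)$ is then quoted from Hartshorne. Your extra care about the intermediate fibers (and the implicit dimension count showing each component involves all of $x_{k+1},\ldots,x_n$) is welcome but does not change the argument.
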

\begin{proof}
We proceed along the lines of the proof of \cite[Thm.\ 4.10]{Har66}. We take the ideal of $X$ that is a monomial ideal generated  by monomials in $x_{k+1},\ldots,x_n$ (since it is supported on $E^{\perp}$). Let $A =\R[t_{ij}]_{\fm}$, where $i$ ranges between $k+1$ and $n$ and for each $i$, the index $j$ ranges between $1$ and the maximal power of $x_i$ in a monomial generating $I$ and $\fm$ is the irrelevant maximal ideal. Let $R = A[x_0,\ldots,x_n]$ and $J$ be the canonical distraction ideal of $I$ in $R$. Then $J$ cuts out a closed subscheme $X^{\prime} \subset \pp_A^n$ that is flat over $A$ and the closed point corresponds to $X$ and the generic point is a fan. This implies that for a generic choice of real numbers $t_{ij}$, the resulting specialization of $J$ is a fan in $\pp^n$ and does not intersect $E$ (since $E$ is cut out by $x_0 = \cdots = x_k = 0$ and in particular the only possible intersection is when all coordinates vanish). Since it is a fan and thus a union of linear subspaces that do not intersect $E$, we conclude that it is hyperbolic with respect to $E$. The last inequality follows from the observation after \cite[Thm.\ 4.10]{Har66}.
\end{proof}

Recall from \cite{Har66} that if $X$ is a fan in $\pp^n$ and we write the ideal of $X$ as an irredundant intersection $I = \bigcap_{j=1}^k \fp_j$, each prime is of the form $\fp_j = (x_{\ell} - a_{\ell,j} x_0, \ldots, x_{m_j} - a_{m,j} x_0)$, then we set $p = p(X)$ to be the largest integer, such that for every $j$ we have: 
\begin{itemize}
\item If $m_j < p$, then $a_{\ell,j} = \cdots = a_{m_j-1, j } = 0$.

\item If $m_j \geq p$, then $a_{\ell,j}= \cdots = a_{p-1,j} = 0$.

\end{itemize}
In particular, if all of the $a_{k,j} \neq 0$, then $p = 1$. Note that in the definition of \cite{Har66} $\ell = 1$. We need this minor modification since the fans we obtain from the construction in the previous lemma have $\ell = k+1$. Also note that for tight fans $p(X)$ is maximal.

We recall from \cite{Har66} the definition of linear specializations. Let $X$ be a scheme over a field $L$. We say that $x \in X$ specializes linearly to $x^{\prime}$ if there exists an extension $L_1/L$ and a morphism $\Spec  L_1[t]_{(t)} \to X$. Such that the generic point is mapped to $x$ and the closed point to $x^{\prime}$.

\begin{Lem} \label{lem:tight_fan}
Let $X \subset \pp^n$ be a fan hyperbolic with respect to $E$, and assume that the ideal of $X$ can be written as the following irredundant intersection:
\[
I = \bigcap_{j=1}^k (x_{k+1} - a_{d+1,j} x_0, \ldots, x_{m_j} - a_{m_j,j} x_0).
\]
Then there exists a chain of linear specializations from $X$ to $Y$, such that each is hyperbolic with respect to $E$ and the $Y$ is either a tight fan, $Y$ is a fan with $p(Y) > p(X)$ or $Y$ is a closed subscheme of $\pp^n$, satisfying $n_*(Y) > n_*(X)$.
\end{Lem}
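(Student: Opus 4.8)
The plan is to mimic the proof of Hartshorne's \cite[Lem.\ 4.8 and its surrounding discussion]{Har66} (the step in the fan‐induction that increases $p$ or $n_*$), being careful at each stage to keep the support inside $E^\perp$ and hence to preserve hyperbolicity with respect to $E$. Write the ideal of $X$ as the irredundant intersection $I=\bigcap_{j=1}^{k}\fp_j$ with $\fp_j=(x_{k+1}-a_{k+1,j}x_0,\dots,x_{m_j}-a_{m_j,j}x_0)$, and let $p=p(X)$. If $X$ is already a tight fan we are done, so assume not; then there is some index $j$ with $m_j\ge p$ and $a_{p,j}\ne 0$, or with $m_j\ge p$ and some earlier coefficient $a_{\ell,j}$ (for $k+1\le\ell<p$) nonzero — but the definition of $p$ forces the latter to vanish, so the obstruction lives exactly at the coordinate $x_p$. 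Following Hartshorne, I would build a one–parameter family over $\Spec k_1[t]_{(t)}$ by a linear change of coordinates of the form $x_p\mapsto x_p - t\,(\text{affine function of }x_0)$ (or more precisely the linear substitution Hartshorne uses to ``push'' the offending $a_{p,j}$ toward $0$), obtaining $X$ as the special fiber and a new fan — or a scheme with larger $n_*$ — as the generic fiber.

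**Next I would** check that this family stays inside the hyperbolic locus. The key point is exactly the one already exploited in Lemmas \ref{lem:deform_to_perp} and \ref{lem:distractions}: each substitution in Hartshorne's construction is by a coordinate change fixing $x_0,\dots,x_k$ and only moving the coordinates $x_{k+1},\dots,x_n$ among themselves (composed with translations proportional to $x_0$), so the support of every fibre remains contained in the locus $x_{k+1}=\dots=x_n=0$'s complement structure — more precisely, every fibre is again a fan none of whose linear components meets $E=\{x_0=\dots=x_k=0\}$, because a component $(x_{j_1}-a_1x_0,\dots)$ meets $E$ only if it is contained in the hyperplane $x_0=0$, which the form of the primes (there is always an $x_0$–term present after distraction, or the component is $(x_{j_1},\dots,x_{j_r})$ with $j_1\le k$, impossible since $j_1\ge k+1$) precludes. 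A fan all of whose linear components avoid $E$ is a finite union of linear subspaces each hyperbolic with respect to $E$, hence hyperbolic with respect to $E$; so every closed fibre of the family is hyperbolic, and in particular $X$ linearly specializes through hyperbolic schemes to the generic fibre.

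**Then** it remains to verify the trichotomy for the generic fibre $Y$. This is purely combinatorial and is Hartshorne's argument verbatim once one observes that our fans carry the extra normalization $\ell=k+1$ rather than $\ell=1$, which only shifts the range of indices and does not affect the analysis of $p(Y)$: either the substitution makes the generic fan a tight fan, or it strictly increases $p$, or — in the degenerate case where the irredundant intersection collapses, i.e.\ two of the primes $\fp_j$ become equal or one contains another in the limit — the generic fibre is no longer a fan but some closed subscheme with strictly larger $n_*$, where the inequality $n_*(Y)>n_*(X)$ comes from \cite[Thm.\ 2.10]{Har66} applied to the flat family over $\Spec k_1[t]_{(t)}$ (the inequality is at least $\ge$ in general and strict because the number of top–dimensional components, or their degree, jumps). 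I would invoke Lemma \ref{lem:curvefibercurve2}/\ref{lem:n_*_increases}-style bookkeeping only implicitly and lean on Hartshorne for the combinatorics.

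**The main obstacle** I anticipate is not the hyperbolicity preservation — that is the ``soft'' part and follows the pattern already established — but rather being careful about the bookkeeping of the irredundant intersection under specialization: one must make sure that the chosen linear substitution genuinely falls into exactly one of Hartshorne's cases and that in the ``$n_*$ jumps'' case the jump is strict. Concretely, the subtlety is that after the substitution some $\fp_j$ may acquire a generator of the form $x_p$ (rather than $x_p-a x_0$), which is what increases $p$, unless the limit identifies two components, and disentangling these two possibilities is where Hartshorne's Proposition on linear specializations of fans does the real work; I would cite \cite[proof of Thm.\ 4.10]{Har66} for it and only indicate the modification $\ell=k+1$.
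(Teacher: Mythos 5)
Your proposal is correct and follows essentially the same route as the paper: defer the actual construction and the trichotomy (tight fan / $p$ increases / $n_*$ strictly increases) to Hartshorne's linear specialization of fans (the paper cites \cite[Prop.\ 3.6]{Har66}), and verify only that every fiber of the resulting one-parameter family is supported on a fan whose linear components avoid $E$, which is automatically hyperbolic with respect to $E$. The paper merely writes the family out more explicitly (a coordinate change $x_p' = x_p - \lambda x_0$, $x_{p+1}' = x_{p+1} - \mu x_p$, which preserves $E$ because $p>k$, followed by scaling the $p$-th coefficient by $t$), which matches the substitution you describe in outline.
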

\begin{proof}
We only need to verify that the transformations applied in the proof of \cite[Prop.\ 3.6]{Har66} preserve hyperbolicity with respect to $E$. Let $p=p(X)$. First note that from the proof of \cite[Prop,\ 3.6]{Har66} it follows that every change of coordinates of the form $x_p^{\prime} = x_p - \lambda x_0$, $x_{p+1}^{\prime} = x_{p+1} - \mu x_p$ and $x_j^{\prime} = x_j$ for $j \neq p, p+1$ results in a fan with the same $p$ and $n_*$. This is an automorphism of $\pp^n$ and preserves $E$ (since $p>k$), hence we can conclude that every fan in this family is hyperbolic with respect to $E$. Fix $\lambda,\mu\in\R$. Consider the fan in $\pp^n_{\R[t]}$ defined by
\[
\bigcap_{j=1}^k (x_{k+1}^{\prime} - b_{d+1,j} x_0^{\prime}, \ldots, x_{m_j}^{\prime} - b_{m_j,j} x_0^{\prime}).
\]
 Here the coefficients $b_{r,j}$ are $a_{r,j}$ if $r \neq p,p+1$ and $b_{p,j} = t (a_p-\lambda)$, $b_{p+1,j} = a_{p+1}-\mu a_p$. It is flat over $\Spec \R[t]$ and it does not intersect the subset of $\pp^n_{\R[t]}$ cut out by $x_0,\ldots,x_k$. Thus, the fiber over every $t\in\R$ is a scheme whose support is a fan that does not intersect $E$. For $t=1$ it is the fan that we obtained from $X$ by applying the linear transformation from the beginning of the proof and for a suitable choice of $\lambda$ and $\mu$ this limit is precisely the $Y$ we have been looking for.
\end{proof}

\begin{Thm} \label{thm:connected}
Let $H$ be the Hilbert scheme of closed subschemes of $\pp^n$ with given Hilbert polynomial. The set of schemes hyperbolic with respect to $E$ in $H(\R)$ is connected in the classical topology.
\end{Thm}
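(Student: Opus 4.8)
The plan is to show that every point of $\hyps \subseteq H(\R)$ can be connected by a path within $\hyps$ to a single distinguished point, namely the (unique, $B_{n+1}$-invariant) tight fan supported on $E^{\perp}$ that is a limit of everything; connectedness of $\hyps$ follows immediately. The strategy follows Hartshorne's proof that $H$ is connected \cite{Har66}, but now each reduction step must be carried out through a family whose real fibers stay hyperbolic with respect to $E$, and this has already been arranged in the sequence of lemmas above.

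Concretely, I would argue as follows. Fix $X \in \hyps$. First apply Lemma \ref{lem:deform_to_perp} to flatly deform $X$ to a scheme $Y$ supported on $E^{\perp}$, through a family all of whose $\R$-fibers are hyperbolic with respect to $E$; by Lemma \ref{lem:n_*_increases} we have $n_*(Y)\ge n_*(X)$, and by Theorem \ref{thm:hyperbolic_locus_closed} the corresponding arc in $H(\R)$ lies in $\hyps$ (it is a path in $H'(\R)$, and $\hyps$ is closed there, so a continuous semialgebraic arc whose interior lies in $\hyps$ has its endpoint in $\hyps$; in fact every fiber is already hyperbolic by construction). Next, by Lemma \ref{lem:borel_invariant} and its corollary, a finite chain of linear specializations — each of which is realized by a flat family over $\A^1$ with all $\R$-fibers supported on $E^{\perp}$ and hence hyperbolic — replaces $Y$ by a balanced monomial scheme $X_r$ supported on $E^{\perp}$ with $n_*(X_r)\ge n_*(X)$. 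Then Lemma \ref{lem:distractions} deforms $X_r$ to a fan $Z$ hyperbolic with respect to $E$ with $n_*(Z)\ge n_*(X)$, again through a family (parametrized by the choice of the $t_{ij}$) whose relevant real fibers are hyperbolic. Finally, iterate Lemma \ref{lem:tight_fan}: starting from the fan $Z$, each application either reaches a tight fan, or strictly increases $p$, or strictly increases $n_*$; since $p$ is bounded above (by $n$, and it is maximal for tight fans) and $n_*$ is bounded above in each coordinate (the components $n_i$ of $n_*$ are bounded in terms of the fixed Hilbert polynomial $P$), after finitely many steps we arrive at a tight fan $F$ hyperbolic with respect to $E$, connected to $X$ by a path in $\hyps$.

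It then remains to see that any two tight fans supported on $E^{\perp}$ and hyperbolic with respect to $E$ are connected within $\hyps$. Here I would invoke, as in \cite{Har66}, that there is a single $B_{n+1}$-invariant tight fan $F_0$ with a given Hilbert polynomial, and that every tight fan linearly specializes to it (the coefficients $a_r$ appearing in a tight fan's defining primes can be scaled to $0$ by the one-parameter subgroup $x_0\mapsto x_0$, $x_i\mapsto t\,x_i$ for $i>k$, which fixes $E$ pointwise-as-a-set); the corresponding flat family over $\A^1$ has all fibers supported on $E^{\perp}$, hence hyperbolic. Thus every $X\in\hyps$ connects to $F_0$ inside $\hyps$, and $\hyps$ is connected.

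The point requiring the most care — and the one I expect to be the main obstacle to write cleanly — is the bookkeeping that each reduction step really produces a \emph{continuous path in $H(\R)$ lying in $\hyps$}, rather than merely an abstract chain of specializations of points of $H$. A linear specialization is a map $\Spec k_1[t]_{(t)}\to H$; to extract a topological arc in $H(\R)$ one base-changes to an $\R$-curve, normalizes, and uses that the normalization map is closed and surjective on real points (as in the proof of Theorem \ref{thm:hyperbolic_locus_closed}), together with Lemma \ref{lem:defalongcurve} to guarantee the limiting fiber is hyperbolic. One must also check the termination count is genuinely finite: that $n_*$ is bounded above componentwise among subschemes with Hilbert polynomial $P$ (this is standard, since $n_i(\cF)$ is read off from a subquotient's Hilbert polynomial whose leading terms are controlled by that of $\cF$), and that $p$ cannot increase indefinitely. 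Once this ladder of finiteness and closedness statements is in place, the connectedness of $\hyps$ is a formal consequence.
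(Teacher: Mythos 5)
Your reduction to a tight fan is exactly the paper's argument: Lemma \ref{lem:deform_to_perp} to land on $E^{\perp}$, Lemma \ref{lem:borel_invariant} to reach a balanced Borel-fixed monomial scheme, Lemma \ref{lem:distractions} to pass to a fan, and then iteration of Lemma \ref{lem:tight_fan} with termination coming from the monotonicity of $n_*$ together with the finiteness of its possible values (the paper quotes \cite[Cor.~3.10]{Har66} for this, which is the same point you make). Your remarks about converting abstract linear specializations into actual arcs in $H(\R)$ lying in $\hyps$ also match what the paper does via Lemma \ref{lem:defalongcurve} and Theorem \ref{thm:hyperbolic_locus_closed}.

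The gap is in your final step. The paper closes the argument by citing \cite[Prop.~3.2]{Har66}, whose proof exhibits, for any two tight fans with the same Hilbert polynomial, a single flat family of tight fans over some $\A^m$ containing both as fibers; since every fiber is a union of real linear subspaces disjoint from $E$, every fiber is hyperbolic and connectedness follows. You instead propose to degenerate each tight fan to ``the unique $B_{n+1}$-invariant tight fan'' by scaling the translation parameters $a_r$ to $0$. Two problems: first, the flat limit of such a scaling is in general \emph{not} a tight fan --- distinct components with the same coordinate-subspace skeleton collide and the limit is non-reduced (already for two points of $\pp^1$ the limit is a double point), so a $B_{n+1}$-invariant \emph{tight fan} with the given Hilbert polynomial typically does not exist. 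Second, and more seriously, for the argument to prove connectedness you need the limit scheme to be \emph{the same} for any two tight fans with the same Hilbert polynomial; this requires both that the combinatorial type of a tight fan is determined by its Hilbert polynomial and that the scheme structure of the flat limit (including embedded components) is independent of the chosen $a_r$'s. Neither is argued, and the first is essentially the content of the very proposition from \cite{Har66} you are trying to bypass. (There is also a sign issue: the automorphism $x_i\mapsto t\,x_i$ for $i>k$, applied as a substitution in the defining equations, sends $a_r$ to $a_r/t$, not to $t\,a_r$; one must take images under $\sigma_t$ as in Lemma \ref{lem:deform_to_perp}.) The fix is simply to invoke \cite[Prop.~3.2]{Har66} as stated and observe that all fibers of Hartshorne's connecting family avoid $E$ and are unions of linear spaces, hence hyperbolic.
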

\begin{proof}
Let $X \subset \pp^n$ be a real subscheme with the prescribed Hilbert polynomial that is hyperbolic with respect to $E$. We will show that $X$ can be deformed into a tight fan hyperbolic with respect to $E$, such that the path passes only through schemes hyperbolic with respect to $E$. 

To do this we apply Lemma \ref{lem:deform_to_perp} to get a subscheme $X^{\prime}$ that is supported on $E^{\perp}$ and note that the Lemma guarantees that the path lies entirely in the closed set of points hyperbolic with respect to $E$ in $\cH(\R)$. Next we apply Lemma \ref{lem:borel_invariant} to obtain the subscheme $X^{\prime}_1$ that is monomial and still supported on $E^{\perp}$. Now we apply Lemma \ref{lem:distractions} to turn $X^{\prime}_1$ to a fan $X^{\prime}_2$ still hyperbolic with respect to $E$. By virtue of Lemma \ref{lem:tight_fan} we can deform $X^{\prime}_2$ into $X^{\prime}_3$ that is hyperbolic with respect to $E$. If $X^{\prime}_3$ is a tight fan we are done. If $X^{\prime}_3$ is a fan again we can repeat the last step, else $X^{\prime}_3$ is a close subscheme of $\pp^n$, such that $n_*(X_3^{\prime}) > n_*(X^{\prime}_2) \geq n_*(X)$ and thus we can repeat the entire process starting with $X_3^{\prime}$. Since the invariant $n_*$ only increases and since by \cite[Cor. 3.10]{Har66} there are only finitely many possibilities for $n_*(Z)$ where $Z$ is a fan, we will end up with a tight fan after a finite number of steps.

Now the claim follows from \cite[Prop. 3.2]{Har66} and its proof which shows that for any two tight fans $X_1$ and $X_2$ with the same Hilbert polynomial there is a flat family of tight fans over some $\A^m$ which has $X_1$ and $X_2$ as fibers at some closed points.
\end{proof}

\section{Smoothing nodes} \label{sec:smoothing}

\begin{Def}
 A hyperbolic subscheme $X\subset\pp^n$ is said to be \textit{hyperbolically smoothable} if its corresponding point is in the closure (with respect to the classical topology) of the subset of all points corresponding to hyperbolic subschemes without real singularities.
\end{Def}

A result by Nuij \cite{nuij} says that every hyperbolic hypersurface is hyperbolically smoothable. In general, we cannot expect that every hyperbolic scheme is hyperbolically smoothable since it might not be contained in a connected component of the Hilbert scheme with points corresponding to nonsingular varieties. This is illustrated in the next example. However, if the variety is Cohen--Macaulay and if it has only ordinary double points for singularities, then we are able to give a tractable criterion for hyperbolic smoothability.

\begin{example} \label{ex:nonsmoothable}
 Let $C\subset \pp^3$ be a hyperbolic plane quartic curve and let $L\subset \pp^3$ be a line that intersects $C$ in one point. Then the curve $C\cup L$ is hyperbolic and it is not smoothable since its arithmetic genus is three and its degree is five, cf. \cite[4.3.2]{HarHir}.
\end{example}

Let $B$ be a finitely generated $\R$-algebra, $I\subset B$ be an ideal such that $A=B/I$ is a finite dimensional $\R$-vector space and such that $\Spec A$ consists only of $\R$-points. Let $D=\R[\epsilon]/(\epsilon^2)$ be the ring of dual numbers, $\pi: D\to \R$ the natural projection and $B'=B\otimes_\R D$. Consider $\varphi\in\Hom_B(I,B/I)$ and let \[ I'=\{x+\epsilon y : \, x\in I, y\in B, \textnormal{ and the image of }y\textnormal{ in }B/I\textnormal{ is equal to }\varphi(x)\}.\] Then, $A'=B'/I'$ is flat over $D$ and we have the natural projection $\pi':A'\to A$. The trace maps commute with the projection maps, i.e. $\pi\circ\tr_{A'/D}=\tr_{A/\R}\circ\pi'$. Thus, we have that $\tr_{A'/D}(\pi'^{-1}(\Nil(A)))\subset (\epsilon)\subset D$. We identify $(\epsilon) \cong \R$ via $\epsilon\mapsto 1$. This is an isomorphism of $D$-modules. We define the $\R$-bilinear form $b_\varphi$ on $\Nil(A)$ as follows: For $f,g\in \Nil (A)$ let $b_\varphi(f,g)=\tr_{A'/D}(f' g')\in (\epsilon)\cong \R$ where $f'\in\pi'^{-1}(
f)$ and $g'\in\pi'^{-1}(g)$. Note that the value of $\tr_{A'/D}(f' g')$ depends only on the choice of $f$ and $g$ rather than on the particular choice of the preimages $f'$ and $g'$.

\begin{Def}
 We say that $\varphi\in\Hom_B(I,B/I)$ is a \textit{strict hyperbolic deformation} if $b_\varphi$ is positive definite.
\end{Def}

\begin{Bem}
 If $A$ is reduced, then every element of $\varphi\in\Hom_B(I,B/I)$ is a strict hyperbolic deformation.
\end{Bem}

\begin{Bem}\label{rem:socle}
 In the situation above let $A=A_0\times\cdots\times A_r$ where the $A_i$ are local $\R$-algebras. Let $p_i\in\Spec A$ be the point corresponding to the maximal ideal of $A_i$. Assume that $A_i$ is reduced for $1\leq i\leq r$ and that the vector space dimension of $A_0$ is two. Let $f \in \sqrt{I}\smallsetminus I$. Note that $f^2\in I$. Let $\varphi\in\Hom_B(I,B/I)$. If $\varphi(f^2)(p_0)<0$, then $\varphi$ is a strict hyperbolic deformation. Indeed, let $g\in A'$ be the residue class of $f$. Then we have $\tr_{A'/D}(g^2)=-\varphi(f^2)(p_i)\epsilon$. Thus, $b_\varphi$ is positive definite.
\end{Bem}

\begin{example}
 Let $B=\R[x,y]$ and $I=(g_1, g_2)$ with $g_1=1-x^2-y^2$ and $g_2=x^2-x$. The zero dimensional scheme $\Spec A$ where $A=B/I$ consists of the two reduced points at $(0,\pm 1)$ and one double point at $(1,0)$. In order to apply the previous remark we consider $xy \in \sqrt{I}\smallsetminus I$. We have \[x^2y^2=-xg_1+(y^2-x-1)g_2.\] Therefore, the strict hyperbolic directions are precisely those $\varphi\in\Hom_B(I,B/I)$ with $\varphi(g_1+2g_2)(1,0)>0$.
\end{example}

\begin{example}
 Let $B=\R[x]$ and $I=(x^3)$. Every flat deformation of $A=B/I$ in $B$ over $D$ is of the form $B'/I'$ where $B'=B\otimes_\R D$ and $I'=(x^3-\epsilon\cdot(ax^2+bx+c))$ for some $a,b,c\in\R$. It is a strict hyperbolic deformation if the bilinear form defined above is positive definite. A representing matrix of this bilinear form is
 \[
  \begin{pmatrix}
   2b&3c\\3c&0
  \end{pmatrix}.
 \]This matrix is not positive definite for any value of $b,c\in\R$. Thus, there are no strict hyperbolic deformations of $A$ in $B$ over $D$.
\end{example}

%

Now let $X\subset\pp^n$ be a closed Cohen--Macaulay subscheme of pure dimension $k$ which is hyperbolic with respect to a linear subspace $E\subset\pp^n$ of dimension $n-k-1$. Let $E'\subset\pp^n$ be a real linear subspace of dimension $n-k$ which contains $E$ and let $X'=X\cap E'$ be the scheme theoretic intersection. Since $X$ is Cohen--Macaulay, we get a natural homomorphism \[H^0(X,\cN_{X/\pp^n})\to H^0(X',\cN_{X'/E'}).\] Since $X'$ is zero dimensional, we can make the following definition.

\begin{Def}
 Let $X\subset\pp^n$ be a closed Cohen--Macaulay subscheme of pure dimension $k$ which is hyperbolic with respect to a linear subspace $E\subset\pp^n$ of dimension $n-k-1$. A global section of the normal sheaf $\cN_{X/\pp^n}$ is a \textit{strict hyperbolic deformation} if for every real linear subspace $E'\subset\pp^n$ of dimension $n-k$ which contains $E$ its image in $H^0(X',\cN_{X'/E'})$ with $X'=X\cap E'$ is a strict hyperbolic deformation.
\end{Def}

\begin{Lem}
 Let $T$ be an integral noetherian scheme, $d\in\Z$ and $X\subset\pp^n_T$ a closed subscheme such that for every $t\in T$ we have that $X_t=X\times_T\Spec \kappa(t)\subset\pp^n_{\kappa(t)}$ is Cohen--Macaulay and has degree $d$. Then every finite surjective linear projection $\pi:X\to\pp^k_T$ is flat.
\end{Lem}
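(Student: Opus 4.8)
The plan is to check flatness of $\pi$ locally on $\pp^k_T$, reducing to a statement about finite algebras, and then to split the verification into two parts: fibrewise flatness over $T$, and flatness of $X$ itself over $T$. Concretely, after replacing $T$ by an affine open $\Spec R$ with $R$ a Noetherian domain, we must show that $\pi$ is flat at each $x\in X$; writing $y=\pi(x)$ and letting $t\in T$ be the common image of $x$ and $y$, the local criterion of flatness (\cite[\S20]{Matsumura}) applies to the finite ring map $\cO_{\pp^k_T,y}\to\cO_{X,x}$ over the base $\cO_{T,t}$: if $\cO_{X,x}$ is flat over $\cO_{T,t}$ and $\cO_{X,x}\otimes_{\cO_{T,t}}\kappa(t)=\cO_{X_t,x}$ is flat over $\cO_{\pp^k_T,y}\otimes_{\cO_{T,t}}\kappa(t)=\cO_{\pp^k_{\kappa(t)},y}$, then $\cO_{X,x}$ is flat over $\cO_{\pp^k_T,y}$ (here we use that $\pp^k_T$ is smooth, hence flat, over $T$). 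So it suffices to prove: (a) for every $t\in T$ the morphism $\pi_t\colon X_t\to\pp^k_{\kappa(t)}$ is flat; and (b) $X$ is flat over $T$.

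For (a): the base change $\pi_t$ is finite, and it is surjective because $\pp^k_T\to T$ is surjective, so surjectivity of $\pi$ passes to the fibre over $t$. In particular $X_t$ has pure dimension $k$ — this is where the hypothesis that $X_t$ has a well-defined degree enters, together with the finiteness and surjectivity of $\pi_t$. Since moreover $X_t$ is Cohen--Macaulay, $\pp^k_{\kappa(t)}$ is regular of dimension $k$, and the fibres of $\pi_t$ are zero-dimensional, miracle flatness gives that $\pi_t$ is flat (cf.\ \cite[Prop.\ 15.4.2]{EGAIV3}, exactly as in the proof of Lemma~\ref{lem:rfclosed1}); being finite flat over the connected scheme $\pp^k_{\kappa(t)}$ it has constant fibre rank, necessarily equal to $\deg X_t=d$.

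The crux is (b). By \cite[Thm.\ III.9.9]{Hart77}, which applies since $T$ is integral and Noetherian, $X$ is flat over $T$ if and only if the Hilbert polynomial $P_{X_t}$ of $X_t\subseteq\pp^n_{\kappa(t)}$ is independent of $t$; let $P$ be the value at the generic point. I would prove $P_{X_t}=P$ for every $t$ by a reduction to a trait, in the spirit of Lemma~\ref{lem:curvesel2} and the curve lemmas preceding Theorem~\ref{thm:hyperbolic_locus_closed}: choose a morphism $\Spec V\to T$ from the spectrum of a discrete valuation ring $V$ sending the generic point of $\Spec V$ to the generic point of $T$ and the closed point to $t$ (possible since $T$ is integral and Noetherian), and base change $X$ to $X_V\subseteq\pp^n_V$. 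Over a trait, flatness of a closed subscheme of $\pp^n_V$ is the same as $\cO$-torsion-freeness, so let $\widetilde X\subseteq X_V$ be the closure of the generic fibre, i.e.\ $X_V$ modulo its torsion subsheaf; then $\widetilde X$ is flat over $V$, and its closed fibre $\widetilde X_0$ therefore has Hilbert polynomial $P$. Now $\widetilde X_0$ is a closed subscheme of $(X_V)_0=X_t\otimes_{\kappa(t)}\kappa(V)$, which is Cohen--Macaulay of pure dimension $k$ and degree $d$ (these properties being stable under the field extension $\kappa(t)\subseteq\kappa(V)$), and $\widetilde X_0$ has the same dimension $k$ and the same degree $d$. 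Comparing colengths along the top-dimensional components shows that the surjection $\cO_{(X_V)_0}\to\cO_{\widetilde X_0}$ is an isomorphism at every generic point of $(X_V)_0$, so its kernel $\cI$ is supported in dimension $<k$; but a Cohen--Macaulay scheme of pure dimension $k$ is unmixed, i.e.\ all associated primes of $\cO_{(X_V)_0}$ have dimension $k$, so $\cO_{(X_V)_0}$ has no nonzero subsheaf supported in dimension $<k$. Hence $\cI=0$, $\widetilde X_0=(X_V)_0$, and $P_{X_t}=P$. Since $t$ (and the trait through it) was arbitrary, this holds for all $t\in T$, proving (b).

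The main obstacle is precisely (b): upgrading ``the fibres are Cohen--Macaulay of constant dimension and degree'' to genuine flatness of the family. This is false without the Cohen--Macaulay hypothesis, since a non-flat family can acquire embedded or lower-dimensional components on special fibres with no change in degree; the role of Cohen--Macaulayness is exactly to forbid such jumps, via the unmixedness argument above, and the reduction to a trait is what makes the statement ``the fibre over $t$ contains the flat limit of the generic fibre'' meaningful. Everything else — the fibrewise miracle flatness of (a), and the passage from (a) and (b) to flatness of $\pi$ through the local criterion of flatness — is standard.
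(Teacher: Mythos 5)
Your proof is correct, but it takes a noticeably longer route than the paper's. Both arguments share the same kernel, namely your step (a): the fibrewise projection $\pi_t\colon X_t\to\pp^k_{\kappa(t)}$ is finite and surjective from a Cohen--Macaulay scheme (of pure dimension $k$) to a regular one, hence flat by miracle flatness, hence finite locally free of constant rank $d$. The paper essentially stops there: it applies the constancy-of-Hilbert-polynomial criterion \cite[Thm.\ III-9.9]{Hart77} not to $X\to T$ but to $\pi\colon X\to\pp^k_T$ itself, over the base $\pp^k_T$, which is integral and noetherian because $T$ is. The fibre of $\pi$ over $p\in\pp^k_T$ is a fibre of the base change $(X_t)_{\kappa(p)}\to\pp^k_{\kappa(p)}$ of $\pi_t$ (where $t$ is the image of $p$ in $T$), so by the fibrewise computation it has length $d$; all fibres of $\pi$ therefore have the same constant Hilbert polynomial $d$, and $\pi$ is flat. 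This makes your step (b) --- flatness of $X$ over $T$ --- unnecessary. Your (b) is itself correct and is the genuinely nontrivial part of your route: the reduction to a trait, the flat limit of the generic fibre, and the unmixedness of a Cohen--Macaulay pure-dimensional special fibre to rule out a proper closed subscheme of the same dimension and degree are all sound, and combined with (a) via the fibrewise criterion of flatness they do yield the lemma. But note that the flatness of $X$ over $T$ is in any case a formal consequence of the lemma's conclusion (compose the flat maps $\pi$ and $\pp^k_T\to T$), so the extra work buys nothing here; the paper's choice of base for the Hilbert-polynomial criterion is exactly what lets one skip it. One small caveat applying to both proofs: miracle flatness requires $X_t$ to be of \emph{pure} dimension $k$ (a lower-dimensional connected component would violate the dimension equality at its points); this is implicit in how the lemma is used in the paper, and you rightly flag where the hypothesis enters.
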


\begin{proof}
 It suffices to show that for every $p\in\pp^k_T$ we have that the Hilbert polynomial of $X\times_{\pp^k_T}\Spec\kappa(p)$ is constant $d$ \cite[Thm. III-9.9]{Hart77}. Let $t\in T$ be the image of $p$ under the projection $\pp^k_T\to T$. We have the following commuting diagram of morphisms:
  \[
  \begin{tikzcd}
 (X_t)_{\kappa(p)}= X_t \times_{\kappa(t)} \Spec\kappa(p) \arrow{d}\arrow{r} & \pp^k_{\kappa(p)} \arrow{d}\\
  X_t \arrow{r} \arrow{d} & \pp^k_{\kappa(t)} \arrow{d}\\
   X \arrow{r} & \pp^k_T
   \end{tikzcd}.
 \]Since $X_t$ is Cohen--Macaulay of degree $d$, the linear projection $X_t\to\pp^k_{\kappa(t)}$ is flat of degree $d$. Thus, the same is true for the base change $(X_t)_{\kappa(p)}\to\pp^k_{\kappa(p)}$ and all of its fibers have Hilbert polynomial $d$.
\end{proof}

\begin{Kor}
 Let $A$ be a discrete valuation ring and $X\subset\pp^n_A$ be a closed subscheme which is flat over $A$. If the fiber $X_0$ over the closed point of $A$ is Cohen--Macaulay, then every finite surjective linear projection $\pi:X\to\pp^k_A$ is flat.
\end{Kor}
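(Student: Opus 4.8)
The plan is to reduce the statement to the preceding lemma, applied with $T=\Spec A$. Recall that $\Spec A$ has exactly two points, the generic point $\eta$ and the closed point $0$, so it suffices to check that the fibers $X_0$ and $X_\eta$ of $X\to\Spec A$ are both Cohen--Macaulay and of one and the same degree $d$; the preceding lemma then immediately yields flatness of $\pi\colon X\to\pp^k_A$. We may assume $X\neq\emptyset$, as otherwise there is nothing to prove.

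First I would exploit that flatness over a discrete valuation ring is the same as torsion-freeness: if $t\in A$ is a uniformizer, then multiplication by $t$ is injective on $\cO_X$. This has two uses. On the one hand $X_\eta\neq\emptyset$, since otherwise $t$ would be locally nilpotent on $\cO_X$, forcing $\cO_X=0$; and because $X\hookrightarrow\pp^n_A$ is proper over $A$, the image of $X\to\Spec A$ is closed and contains $\eta$, hence equals $\Spec A$, so $X_0\neq\emptyset$ as well. On the other hand, at any $x\in X_0$ we have $\cO_{X_0,x}=\cO_{X,x}/t\,\cO_{X,x}$ with $t$ a nonzerodivisor in the maximal ideal of $\cO_{X,x}$; since passing modulo a nonzerodivisor in the maximal ideal lowers both depth and dimension by exactly one, $\cO_{X,x}$ is Cohen--Macaulay precisely because $\cO_{X_0,x}$ is, and the latter holds by hypothesis. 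Thus $X$ is Cohen--Macaulay along its closed fiber.

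Next I would promote this to ``$X$ is Cohen--Macaulay everywhere''. For $x\in X\setminus X_0$ the point $x$ lies in $X_\eta$, so the closure $\overline{\{x\}}$ in $X$ maps onto a closed subset of $\Spec A$ containing $\eta$, i.e.\ onto all of $\Spec A$; by properness of $X$ over $A$ this closure meets $X_0$, so $x$ admits a specialization $x'\in X_0$. Then $\cO_{X,x}$ is a localization of the Cohen--Macaulay ring $\cO_{X,x'}$, hence is itself Cohen--Macaulay. Consequently $X$ is Cohen--Macaulay, and so is its open subscheme $X_\eta$. Finally, flatness of $X$ over the connected scheme $\Spec A$ forces the Hilbert polynomials of $X_0$ and $X_\eta$ to coincide, so in particular these two fibers share a common degree $d$. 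Both fibers are now Cohen--Macaulay of degree $d$, and the preceding lemma applies.

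The routine inputs are standard commutative algebra (torsion-freeness over a DVR, the effect of a nonzerodivisor on depth and dimension, localizations of Cohen--Macaulay rings, constancy of Hilbert polynomials in flat families over a connected base). The one step that deserves a moment's thought is the passage from Cohen--Macaulayness along the closed fiber to Cohen--Macaulayness of the whole of $X$; this is exactly where properness of $X\hookrightarrow\pp^n_A$ over $A$ is used, through the observation that every point of the generic fiber specializes into the closed fiber.
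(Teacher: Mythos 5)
Your proof is correct, and it reduces to the preceding lemma exactly as the paper does: verify that both fibers of $X\to\Spec A$ are Cohen--Macaulay of a common degree $d$. The degree statement is handled identically (constancy of Hilbert polynomials in a flat family). Where you diverge is the justification that the generic fiber is Cohen--Macaulay. The paper simply invokes the openness of the locus of points where the fibers of a flat morphism are Cohen--Macaulay (EGA IV, 12.2.1(vii)): since that locus contains the closed fiber and is open, and every point of $X$ specializes into the closed fiber, it is all of $X$. You instead give a self-contained commutative-algebra argument: a uniformizer $t$ is a nonzerodivisor on $\cO_{X,x}$ by flatness, so $\cO_{X,x}$ is Cohen--Macaulay iff $\cO_{X,x}/t\cO_{X,x}=\cO_{X_0,x}$ is; then you propagate Cohen--Macaulayness from the closed fiber to all of $X$ via properness (every point of $X_\eta$ specializes into $X_0$, and local rings of Cohen--Macaulay local rings are Cohen--Macaulay), and restrict to the open subscheme $X_\eta$. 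Both uses of properness/specialization are the same in spirit; your version trades the EGA citation for the elementary ``depth and dimension each drop by one modulo a nonzerodivisor'' computation, which is arguably more transparent and avoids the heavier reference. The nonemptiness discussion at the start is harmless but not needed, since an empty $X$ admits no surjective projection onto $\pp^k_A$.
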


\begin{proof}
 Since $X$ is flat over $A$, the fiber $X_1$ over the generic point has the same degree as $X_0$. Since having a Cohen--Macaulay fiber is an open condition \cite[Thm. 12.2.1(vii)]{EGAIV3}, $X_1$ is also Cohen--Macaulay. Thus, we can apply the previous lemma.
\end{proof}

\begin{Thm}\label{thm:smoothable}
 Let $R=\R\{\{\epsilon\}\}$ be the field of Puiseux series over $\R$. Let $X\subset\pp^n$ be a closed Cohen--Macaulay subscheme of pure dimension $k$ which is hyperbolic with respect to the linear subspace $E\subset\pp^n$. Let $X'\subset \pp^n_{ \R[[\epsilon]]}$ be a flat deformation of $X$ over $\R[[\epsilon]]$. If the induced flat deformation of $X$ over $D=\R[[\epsilon]]/(\epsilon^2)$ is a strict hyperbolic deformation, then $X'_R=X'\times_{\R[[\epsilon]]} \Spec R$ is a hyperbolic subscheme of $\pp^n_R$ without singular $R$-points.
\end{Thm}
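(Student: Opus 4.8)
The plan is to reduce the statement to positive definiteness of a trace form, and then to a local computation of quadratic forms over the valuation ring of $R$.

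\textbf{Reduction.} Since $X=X'_0$ is Cohen--Macaulay, the corollary above (applied to the discrete valuation ring $\R[[\epsilon]]$) shows that the linear projection $\pi\colon X'\to\pp^k_{\R[[\epsilon]]}$ from $E$ is flat; it is also finite (it is proper over $\R[[\epsilon]]$, its special fibre $X\to\pp^k$ is finite because $X$ is hyperbolic, so $\pi$ is quasi-finite on a neighbourhood of $X'_0$ and hence everywhere by properness over $\R[[\epsilon]]$, whence finite by proper $+$ quasi-finite $=$ finite), and surjective. Base change gives a finite flat surjection $\pi_R\colon X'_R\to\pp^k_R$ with $X'_R\cap E_R=\emptyset$. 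For $y\in\pp^k(R)$ let $C_y$ be the fibre algebra (finite, free of rank $d=\deg X$ over $R$). If $\tr_{C_y/R}$ is positive definite, then $C_y$ is reduced and has only $R$-points, so $C_y\cong R^d$; hence every linear $(n-k)$-space through $E_R$ meets $X'_R$ only in $R$-points, and every $R$-point of $X'_R$ is a smooth point of a reduced fibre of the flat map $\pi_R$, hence regular. Thus it suffices to prove that $\tr_{C_y/R}$ is positive definite for every $y\in\pp^k(R)$.

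\textbf{Localisation at the reduction point.} Fix $y\in\pp^k(R)$. As $\pp^k$ is proper over the valuation ring $\cV$ of $R$ (which contains $\R[[\epsilon]]$ and has residue field $\R$), the point $y$ extends to a $\cV$-point reducing to some $y_0\in\pp^k(\R)$. Choosing local coordinates $t=(t_1,\dots,t_k)$ on $\pp^k$ centred at $y_0$, the finite locally free module defining $\pi$ is free over $\widehat{\cO}_{\pp^k_{\R[[\epsilon]]},y_0}\cong\R[[t,\epsilon]]$, so near $y_0$ the trace form of $\pi$ is a matrix $G\in\Sym_d(\R[[t,\epsilon]])$, and $\tr_{C_y/R}$ is (up to congruence over $R$) the specialisation of $G$ at $t=a$, $\epsilon=\epsilon$, where $a=(a_1,\dots,a_k)$ are the infinitesimal coordinates of $y$ in $\cV$. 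Here $G_0(t):=G(t,0)$ is the Gram matrix of the trace form of $X\to\pp^k$, an \emph{algebraic} matrix function that is positive semidefinite at all real $t$ near $0$ because $X$ is hyperbolic, with $K:=\ker G_0(0)=\Nil(\cO_{X_{y_0}})$. Moreover, the induced first-order deformation over $D$ restricts over $y_0$ to the first-order deformation of $\cO_{X_{y_0}}$ classified by the image of $\varphi$ in $H^0(X_{y_0},\cN)$, and the computation defining $b_\varphi$ identifies the $K\times K$-block of $G_1(0):=\partial_\epsilon G(t,0)|_{t=0}$ with the Gram matrix of $b_\varphi$; by the definition of strict hyperbolic deformation this block is positive definite.

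\textbf{The local positivity lemma.} It remains to prove: if $G\in\Sym_d(\R[[t,\epsilon]])$ has $G_0(t)$ positive semidefinite for real $t$ near $0$ and $G_1(0)|_K$ positive definite on $K=\ker G_0(0)$, then $G(a,\epsilon)$ is positive definite over $R$ for every infinitesimal $a\in\cV^k$. After a constant change of basis write $G_0(0)=\left(\begin{smallmatrix}D_+&0\\0&0\end{smallmatrix}\right)$ with $D_+\succ0$, and write every matrix in the corresponding blocks $\left(\begin{smallmatrix}P&Q\\Q^{T}&S\end{smallmatrix}\right)$. Positive semidefiniteness of $G_0$ forces the $K^{c}\times K$-block of $G_0$ to vanish at $t=0$ and the $K\times K$-block $S_0(t)$ to vanish to order two at $t=0$; by the transfer principle $G_0(a)\succeq0$ over $R$, and since $P_0(a)=D_++(\text{infinitesimal})\succ0$, its Schur complement $\Sigma_a:=S_0(a)-Q_0(a)^{T}P_0(a)^{-1}Q_0(a)$ is $\succeq0$. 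A bookkeeping of valuations (using $v(a_i)>0$) then gives $P\succ0$ and shows that the Schur complement of $G(a,\epsilon)$ with respect to $P$ equals $\Sigma_a+\epsilon\,(G_1(0)|_K)+E$ with every entry of $E$ of valuation $>1$. Since $\Sigma_a\succeq0$ and $\epsilon\,(G_1(0)|_K)\succeq c\epsilon\,\Id$ for some positive real $c$, while $E$ is of strictly higher valuation than $c\epsilon\,\Id$, this Schur complement is positive definite over $R$; hence so is $G(a,\epsilon)$, and $\tr_{C_y/R}$ is positive definite.

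\textbf{Main obstacle.} The crux is the local positivity lemma: the point $y$ may approach $y_0$ at an arbitrary (possibly much slower) rate than $\epsilon$, so the naive perturbation estimate on the Schur complement of $G(a,\epsilon)$ fails, and one is forced to exploit the semidefiniteness of the \emph{undeformed} Schur complement $\Sigma_a$ together with the sign condition on $G_1$ coming from strict hyperbolicity. A secondary technical point, which makes the specialisation of $G$ at an infinitesimal value legitimate and equal (up to congruence) to $\tr_{C_y/R}$, is the bookkeeping relating the formal deformation over $\R[[\epsilon]]$, its completed local rings at $\R$-points of $\pp^k$, and genuine $R$-points; this is handled via properness of $\pp^k$ over $\cV$ and compatibility of trace forms with base change.
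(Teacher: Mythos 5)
Your proposal is correct and follows the same overall strategy as the paper: reduce to positive definiteness of the trace form of the finite flat projection $X'_R\to\pp^k_R$ at every $R$-point, expand the Gram matrix in powers of $\epsilon$, and win because the order-zero term is positive semidefinite (hyperbolicity of $X$, transferred to the real closed field $R$) while the order-one term is strictly positive on the kernel (the strict hyperbolic deformation hypothesis) and dominates all error terms of higher valuation. The execution differs in two places. First, the paper trivializes the module globally on the affine chart via Quillen--Suslin, so its Gram matrix $H=H_0+\epsilon H_1+\epsilon^2H_2$ has entries in $\R[[\epsilon]][x_1,\dots,x_k]$; this makes both the evaluation at an $R$-point and the Tarski transfer of semidefiniteness immediate, whereas your passage to $\widehat{\cO}_{\pp^k_{\R[[\epsilon]]},y_0}\cong\R[[t,\epsilon]]$ forces you to justify substituting infinitesimal Puiseux series into formal power series (fine, since the $a_i$ lie in some $\R[[\epsilon^{1/N}]]$, but it is extra bookkeeping) and to transfer positive semidefiniteness of a power-series matrix, which really should be done at the level of the uncompleted local ring or the polynomial model. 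Second, for the final positivity the paper avoids any block decomposition: for each vector $w$ it simply splits into the two cases $v(w^TH_0(q)w)=0$ (the semidefinite term dominates) and $v(w^TH_0(q)w)>0$ (then $\bar w\in\Nil$ of the fibre over $\bar q$, so $w^TH_1(q)w$ has valuation zero and positive sign, while $w^TH_0(q)w\geq0$ cannot hurt and $\epsilon^2$-terms are negligible). Your Schur-complement version is a valid, slightly heavier repackaging of the same inequality; your ``main obstacle'' paragraph correctly identifies the point that $y$ may degenerate to $y_0$ arbitrarily slowly compared to $\epsilon$, which is exactly what the paper's two-case argument is designed to handle.
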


\begin{proof}
  Let $\pi: X'\to \pp^k_{\R[[\epsilon]]}$ be the linear projection from $E$. It follows from our assumptions that $\pi$ is finite and flat by the preceding corollary. Let $v:R\to\Q\cup\{\infty\}$ be the natural valuation on the field of Puiseux series. Letting $p=(p_0:\ldots:p_k)\in\pp^k_R$ with $p_i\in R$ not all zero, we have to show that the trace bilinear form that we get from the projection $X'_R\to\pp^k_R$ is positive definite at $p$. Without loss of generality we can assume that $v(p_0)\leq v(p_i)$ for all $1\leq i\leq k$. In particular, $p_0\neq 0$ and $p=(1:q_1:\ldots:q_k)$ where $v(q_i)\geq0$ for $1\leq i\leq k$. 
  Let $U\subset\pp^k_{\R[[\epsilon]]}$ be the open affine subset given by $x_0\neq0$ and let $\pi^{-1}(U)=\Spec A_1$. We have $U=\Spec A_0$ with $A_0=\R[[\epsilon]][x_1,\ldots,x_k]$. As an $A_0$-module $A_1$ is finitely generated and flat, thus it is projective. By the Quillen--Suslin theorem (cf. e.g. \cite[Thm. 8.5]{brunsktheory}) $A_1$ is actually free as $A_0$-module. Thus, after choosing a basis we can represent the trace bilinear form of the $A_0$-algebra $A_1$ with a symmetric matrix $H$ with entries in $A_0$. We can write \[H=H_0+\epsilon H_1 + \epsilon^2 H_2,\] where $H_0, H_1$ are symmetric matrices of some size $d$ with entries in $\R[x_1,\ldots,x_k]$ and $H_2$ is a symmetric matrix with entries in $A_0$ of the same size. The trace bilinear form of the projection $X'_R\to\pp^k_R$ at $p$ is represented by the matrix $H(q)$ that we get from $H$ by substituting $x_i$ by $q_i$. Also note that the trace bilinear form of the projection $X\to\pp^k_\R$ from $E$ on the open affine subset $x_0\neq0$ is 
represented by $H_0$. Let $0\neq w\in R^d$ be some vector. We have to show that $w^T H(q) w>0$. Without loss of generality we can assume that $v(w_j)=0$ for some $1\leq j\leq d$ and $v(w_i)\geq0$ for all $1\leq i\leq d$. Since $X$ is hyperbolic, $H_0$ is positive semidefinite. Thus, the symmetric matrix $H_0(q)$ is positive semidefinite and in particular, if $v(w^TH_0(q)w)=0$, then $w^T H(q) w>0$. Otherwise we still have $w^T H_0(q) w\geq0$ and since the induced deformation over $D$ is a strict hyperbolic deformation, we have $v(w^TH_1w)=0$ and $w^T H_1(q) w>0$. This implies $w^T H(q) w>0$.  
\end{proof}

\begin{Kor}
 Let $X\subset\pp^n$ be a closed Cohen--Macaulay subscheme of pure dimension $k$ which is hyperbolic with respect to the linear subspace $E\subset\pp^n$. Assume that the Hilbert scheme is nonsingular at the point $x$ corresponding to $X$. If there exists a strict hyperbolic deformation in $H^0(X,\cN_{X/\pp^n})$, then $X$ is hyperbolically smoothable.
\end{Kor}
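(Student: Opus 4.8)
The idea is to interpret a strict hyperbolic deformation $\varphi\in H^0(X,\cN_{X/\pp^n})$ as a tangent vector to the Hilbert scheme at $x$, lift it along a smooth curve through $x$, and then apply Theorem \ref{thm:smoothable} to conclude that the generic fiber of this curve is hyperbolic and smooth. First I would use the hypothesis that the Hilbert scheme $H$ is nonsingular at $x$: then the completed local ring $\widehat{\cO}_{H,x}$ is a power series ring over $\R$, and since $\varphi$ spans a line in the Zariski tangent space $T_xH=H^0(X,\cN_{X/\pp^n})$, there is a morphism $\Spec\R[[\epsilon]]\to H$ sending the closed point to $x$ and whose induced map on tangent spaces hits $\varphi$. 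Pulling back the universal family over $H$ along this arc gives a flat deformation $X'\subseteq\pp^n_{\R[[\epsilon]]}$ of $X$ whose induced deformation over $D=\R[[\epsilon]]/(\epsilon^2)$ is exactly the first-order deformation $\varphi$, hence a strict hyperbolic deformation in the sense of the earlier definition.

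Next I would apply Theorem \ref{thm:smoothable} to this $X'$: since $X$ is Cohen--Macaulay of pure dimension $k$ and hyperbolic with respect to $E$, and the order-$\epsilon^2$ truncation is a strict hyperbolic deformation, the theorem gives that $X'_R=X'\times_{\R[[\epsilon]]}\Spec R$ over the Puiseux series field $R=\R\{\{\epsilon\}\}$ is a hyperbolic subscheme of $\pp^n_R$ with no singular $R$-points. The remaining work is to descend this statement over the Puiseux field to an honest statement about a family over a real interval. Because everything in sight is finitely presented, one can spread $X'$ out to a flat family $\mathscr{X}\to T$ over an affine curve $T$ over $\R$ with a distinguished point $t_0$ mapping to $x$, such that the generic point corresponds to $X'_R$. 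The properties we need of nearby fibers---being hyperbolic with respect to $E$ (by Theorem \ref{thm:hyperbolic_locus_closed} the hyperbolic locus is closed, and by a limiting/constructibility argument a punctured neighborhood of $t_0$ in $T(\R)$ lands in it) and being smooth at real points (an open condition, which holds at the generic point and hence on a dense open, and since the generic fiber has no singular $R$-points this open set meets every real branch of $T$ through $t_0$)---then hold for real $t$ arbitrarily close to $t_0$. This exhibits $x$ in the closure of the locus of hyperbolic subschemes without real singularities, i.e.\ $X$ is hyperbolically smoothable.

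The main obstacle is the descent from the Puiseux series field to a real one-parameter family: one must arrange a real arc $t\mapsto\mathscr{X}_t$ in $H(\R)$ converging to $x$ along which both hyperbolicity and real-smoothness persist, using that $R\supseteq\R((\epsilon))$ and that a $\Q$-valued Puiseux series arc comes from an algebraic curve defined over $\R$; the curve selection lemma (as in Lemma \ref{lem:curvesel2}) and the fact that $\Spec R$ maps into the real points of such a curve are what make this work. Everything else---the tangent-space lifting, the application of Theorem \ref{thm:smoothable}, and the openness/closedness inputs---is either formal or already established in the paper.
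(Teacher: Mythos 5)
The paper leaves this corollary without an explicit proof, and your argument is precisely the intended one: use smoothness of the Hilbert scheme at $x$ to realize the strict hyperbolic deformation as the first-order part of a formal arc $\Spec\R[[\epsilon]]\to H$ through $x$, apply Theorem \ref{thm:smoothable} to the pulled-back family, and then transfer the resulting $R$-point of the semialgebraic set of smooth hyperbolic subschemes back to a classical closure statement via the standard correspondence between Puiseux-series points with infinitesimal standard part and Euclidean limit points (curve selection). Your write-up is correct; the only step needing care is that final descent, and the semialgebraic transfer you invoke is exactly the right tool for it.
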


\begin{Bem}
 Let $X\subset\pp^n$ be a hyperbolic subscheme, let $T$ be a smooth, irreducible curve over $\R$ and let $X'\subset\pp^n\times T$ be a subscheme, flat over $T$ such that the fiber over $t_0\in T(\R)$ is $X$. Proposition \ref{thm:smoothable} gives a criterion on the induced flat deformation over the dual numbers to check whether $t_0$ is in the closure of the set of points $t\in T(\R)$ whose fiber is a smooth hyperbolic subscheme. In general, for deciding this question it is not enough to look at the induced flat deformation over the dual numbers as the following example shows: Let $B=\R[x]$ and $I=(x^2)$. The following two flat deformations of $A=B/I$ in $B$ over $\R[t]$ given by $I_1=(x^2+t\cdot x)\subset B[t]$ and $I_2=(x^2+t\cdot x+t^2)\subset B[t]$ give rise to the same deformation over $D$. But while in the first case every fiber over $t\neq0$ consists of two reduced real points, we find that in the second case no fiber consists of two reduced real points.
\end{Bem}

\section{Reciprocal linear spaces}

Recall, for example from \cite{SSV13}, that a \textit{reciprocal linear space} is the Zariski closure of the image of a linear space under the Cremona transform. These projective varieties are hyperbolic \cite{Var95}, but usually very singular \cite{SSV13}. In the following two examples we will use the methods developed in the previous section to get new examples of \textit{smooth} hyperbolic varieties from reciprocal linear spaces. After that, we will show that the reciprocal of a general two-dimensional linear space is not hyperbolically smoothable given that its codimension is at least two.

\begin{example}
 We consider the reciprocal linear space $X \subset \mathbb{P}^4$ of dimension two and degree four from \cite[Example 1]{SSV13}. We have that $X$ is the common zero set of the two polynomials $f=x_0 x_1-x_0 x_3 - x_1 x_3$ and $g=x_0 x_2-x_0 x_4-x_2 x_4$. It is hyperbolic with respect to the line $E$ spanned by $(1:1:0:-1:0)$ and $(1:0:1:0:-1)$. It has four singularities, all of them real and at infinity, namely $p_1=(0:1:0:0:0)$, $p_2=(0:0:1:0:0)$, $p_3=(0:0:0:1:0)$ and $p_4=(0:0:0:0:1)$. Let $S=\mathbb{R}[x_0,\ldots,x_4]$ and $I=(f,g) \subset S$. Using Remark \ref{rem:socle} we find that $\varphi\in\textrm{Hom}_S(I,S/I)$ is a strict hyperbolic deformation if $\varphi(g_2)(p_1)<0$, $\varphi(g_1)(p_2)<0$, $\varphi(g_2)(p_3)<0$, $\varphi(g_1)(p_4)<0$. Let $q=(x_1+x_3)^2+(x_2+x_4)^2$. The variety $X_t \subset \mathbb{P}^4$ cut out by the two polynomials $f_t=f-t q$ and $g_t=g-t q$ is smooth and disjoint from $E$ for all $t>0$. Thus, by the Proposition 
 \ref{thm:smoothable} and Corollary \ref{cor:smoothint} $X_t$ is hyperbolic for all $t>0$.
\end{example}

\begin{example}
 Let $\cL^{-1}\subset \pp^5$ be the reciprocal of the row span $\cL$ of the matrix
\[
 \begin{pmatrix}
  1&0&0&0&1&1\\
  0&1&0&0&1&1\\
  0&0&1&0&1&0\\
  0&0&0&1&0&1
 \end{pmatrix}.
\]The threefold $\cL^{-1}$ has degree $7$ and it is cut out by the three cubic forms
\[y_0y_1y_2-y_0y_1y_4-y_0y_2y_4-y_1y_2y_4,\]
\[y_0y_1y_3-y_0y_1y_5-y_0y_3y_5-y_1y_3y_5,\]
\[y_2y_3y_4-y_2y_3y_5-y_2y_4y_5+y_3y_4y_5.\]
The singular locus of $\cL^{-1}$ consists of all lines spanned by two standard unit vectors except for the three lines $\overline{e_0 e_1}$, $\overline{e_2 e_4}$ and $\overline{e_3 e_5}$. The linear projection from $\cL^{\perp}$ is given by
\[\pi:\cL^{-1}\to\pp^3, \, y\mapsto(y_0+y_4+y_5:y_1+y_4+y_5:y_2+y_4:y_3+y_5).\]
We consider the intersection $X$ of $\cL^{-1}$ with the preimage of the hyperplane $H=\cV_+(x_0-x_1+x_2+x_3)$ under $\pi$. Then we get, in new coordinates $z_i$, the variety $X\subset\pp^4$ cut out by the ideal $I\subset S=\R[z_0,z_1,z_2,z_3,z_4]$ generated by the following three cubic forms\[f_1=z_1 z_2 z_3 - z_1 z_2 z_4 - z_1 z_3 z_4 + z_2 z_3 z_4,\]
\[f_2=z_0^2 z_2 + z_0 z_1 z_2 + z_0 z_2^2 + z_0 z_2 z_3 - z_0^2 z_4 - z_0 z_1 z_4 -  2 z_0 z_2 z_4 - z_1 z_2 z_4 - z_2^2 z_4 - z_0 z_3 z_4 \]\[- z_2 z_3 z_4 - z_0 z_4^2 -   z_2 z_4^2,\]
\[f_3=z_0^2 z_1 + z_0 z_1^2 + z_0 z_1 z_2 - z_0^2 z_3 - 2 z_0 z_1 z_3 -   z_1^2 z_3 - z_0 z_2 z_3 - z_0 z_3^2 - z_1 z_3^2 + z_0 z_1 z_4 \]\[- z_1 z_2 z_4 -   z_0 z_3 z_4 - 2 z_1 z_3 z_4 + z_2 z_3 z_4.\]
The restriction of $\pi$ to $X$ is given by the linear projection
\[X\to\pp^2, \, z\mapsto (z_0+z_3+z_4:z_1+z_3:z_2+z_4),\]i.e. $X$ is hyperbolic with respect to $E=\cV_+(z_0+z_3+z_4,z_1+z_3,z_2+z_4)$. The singular locus of $X$ consists of twelve nodes. Let
\[g_1=(-z_2+z_4)(z_2^2+z_4^2)+(z_1-z_3)(z_1^2+z_3^2),\]
\[g_2=(2z_0+z_1+3z_2+z_3-z_4)(z_1^2+z_3^2)+(-z_2+z_4)(z_2^2+z_4^2),\]
\[g_3=3(z_1-z_3)(z_1^2+z_3^2)+2(z_0+z_3+z_4)(z_2^2+z_4^2).\]
 Using a computer algebra system one checks that $X$ is locally a complete intersection and that there is a $\varphi\in\Hom_S(I,S/I)$ satisfying $\varphi(f_i)=g_i$.
Thus \[X_t=\{(p,t)\in\pp^4\times\A^1:\, f_i(p)=t\cdot g_i(p),\,\, i=1,2,3 \}\] is a flat deformation of $X$ in $\pp^4$ over $\A^1$. The fiber over $t\in\A^1$ is nonsingular and does not intersect $E$ for every $0<t<0.08$. Using Remark \ref{rem:socle} and a computer algebra system it is not hard to show that the induced deformation over the dual numbers is a strict hyperbolic deformation. Thus, by Proposition \ref{thm:smoothable} and Corollary \ref{cor:smoothint} the fiber over $t\in\A^1$ is a smooth hyperbolic variety whenever $0<t<0,08$.
\end{example}

The idea to the following arose in a discussion with Mateusz Micha\l{}ek and Kristin Shaw. Let $A\in \R^{3\times n}$ with $n\geq5$ be a real matrix none of whose maximal minor vanishes. Let $\cL\subset\pp^{n-1}$ be the plane spanned by the rows of $A$ and $\cL^{-1}$ its reciprocal, i.e. the Zariski closure of the image of $\cL$ under coordinatewise inversion. We will show that $\cL^{-1}$ is not hyperbolically smoothable. In $\cL$ we have the lines $l_1,\ldots,l_n$ that are the intersections of $\cL$ with the coordinate hyperplanes. The variety $\cL^{-1}$ is the blow-up of $\cL$ at the $\binom{n}{2}$ points where two of the $l_i$ intersect. Its singularities are the points $p_i$ which are the images of the $l_i$. The line $g_{ij}$ which is spanned by $p_i$ and $p_j$ for $i\neq j$ is the exceptional divisor over the point $q_{ij}=l_i\cap l_j$. The projection $\pi:\cL^{-1}\to\pp^2$ from $\cL^{\perp}$ is real fibered \cite{Var95}. Let $H\subset\pp^2$ be a line not intersecting the projections of the $p_i$ and whose preimage $C=\pi^{-1}(H)\subset\cL^{-1}$ is smooth. Let $C'\subset\cL$ be the Zariski closure of the image of $C$ under the Cremona transform. 

\begin{Lem}
 The curve $C'$ intersects each $l_i$ transversally in the $n-1$ points $q_{ij}$ and in no more points. In particular, the degree of $C'$ is $n-1$.
\end{Lem}

\begin{proof}
 The line $H$ intersects $\pi(g_{ij})$ transversally in exactly one point. Thus, $C$ intersects each $g_{ij}$ transversally in exactly one point. Since $C$ does not contain any of the $p_i$, the claim follows.
\end{proof}

 In particular, the lemma implies that the curve $C'$ is smooth and thus isomorphic to $C$. Furthermore, as shown in \cite{CentralCurve}, the curve $C'\subset\cL$ is hyperbolic as it is the derivative of a product of linear forms. Thus, the real part $C(\R)$ has $\lceil \frac{n-1}{2}\rceil$ connected components.
 
 \begin{Prop}
  For $n\geq5$ the reciprocal $\cL^{-1}$ of a generic two dimensional linear space is not hyperbolically smoothable.
 \end{Prop}

\begin{proof}
    Let $T$ be a smooth curve and consider a closed subscheme $X \subset \pp^{n-1}_T$ such that the projection $X \to T$ is flat. Let $t_0\in T(\R)$ such that $\cL^{-1} = X_{t_0}$ and assume that every neighbourhood of $t_0 \in T (\R)$ contains a $t$ with $X_t$ smooth and hyperbolic with respect to $L^{\perp}$. Without loss of generality we can assume that for all $t \in T (\C)$ we have that $X_t \cap L^{\perp} = \emptyset$. We can consider the projection $\pi : X \to \pp^2_T$ from center $\cL^{\perp}$. Let $Y \subset \pp_T^{n-1}$ be the preimage of the line $H$ under this projection. This gives rise to the flat family $Y \to T$ with $C = Y_{t_0}$. Since $C$ is smooth, the fibers $Y_t$ for $t$ in a sufficiently small neighbourhood $U$ of $t_0$ have real parts with  $\lceil \frac{n-1}{2}\rceil$ connected components. On the other hand, the degree of $\cL^{-1}$ is $d=\binom{n-1}{2}$ \cite{PS06} meaning that $X_t(\R)$ is a $d$-sheeted covering space of $\R\pp^2$ for $t \in U$. This implies that $Y_t(\R)$ for $t\in U$ has at least $\lceil\frac{d}{2}\rceil$ connected components. Thus, we have $\lceil\frac{(n-1)(n-2)}{4}\rceil\leq \lceil\frac{n-1}{2}\rceil$. For $n\geq5$ this is impossible.
\end{proof}

\bigskip


 \bibliographystyle{alpha}
 \bibliography{biblio}
 \end{document}